\newcommand{\hair}{\ifmmode\mskip1mu\else\kern0.08em\fi}
\renewcommand{\P}{\mathbb{P}}
\newcommand{\R}{\mathbb{R}}
\newcommand{\N}{\mathbb{N}}
\newcommand{\Z}{\mathbb{Z}}
\newcommand{\one}{\mathbbm{1}}
\newcommand{\U}{\mathcal{U}}
\newcommand{\cytrm}[1]{\mathrm{CyTRM}(#1)}
\newtheorem{theorem}{Theorem}[section]
\newtheorem*{theorem*}{Theorem}
\newtheorem*{prop*}{Proposition}
\newtheorem{prop}[theorem]{Proposition}
\newtheorem*{corollary*}{Corollary}
\newtheorem{lemma}[theorem]{Lemma}
\theoremstyle{definition}
\newtheorem{defn}[theorem]{Definition}
\newtheorem*{notation}{Notation}
\newtheorem{remark}[theorem]{Remark}
\renewenvironment{proof}[1][\proofname] {\par\pushQED{\qed}\normalfont\topsep6\p@\@plus6\p@\relax\trivlist\item[\hskip\labelsep\bfseries#1\@addpunct{.}]\ignorespaces}{\popQED\endtrivlist\@endpefalse}
\title{Critical point for infinite cycles in a random loop model on trees}
\author{Alan Hammond and Milind Hegde}
\date{}
\address{Department of Mathematics\\
 UC Berkeley \\
  899 Evans Hall \& 741 Evans Hall}
\email{alanmh@berkeley.edu, milind.hegde@berkeley.edu}
\thanks{The first author is supported by NSF grant DMS-1512908.}
\begin{document}
\maketitle

\begin{abstract}
We study a spatial model of random permutations on trees with a time parameter $T>0$, a special case of which is the random stirring process. The model on trees was first analysed by Bj\"ornberg and Ueltschi \cite{ueltschi}, who established the existence of infinite cycles for $T$ slightly above a putatively identified critical value but left open behaviour at arbitrarily high values of $T$. We show the existence of infinite cycles for all $T$ greater than a constant, thus classifying behaviour for all values of $T$ and establishing the existence of a sharp phase transition. Numerical studies \cite{numeric_study} of the model on $\Z^d$ have shown behaviour with strong similarities to what is proven for trees.
%
%The random stirring process on a graph $G$
%is a random function from $[0,\infty)$
%to the set of permutations of the vertex set of $G$. It has been analyzed and shown in \cite{hammond1, hammond2} to have a critical point $T_c$, below which there is no infinite cycle and above which there is an infinite cycle, both a.s., for infinite trees via an equivalent model known as the Cyclic Time Random Walk (CyTRW). We show the same for a modification of the CyTRW called Variable Time CyTRW which allows direction switching, by simplifying the proof in \cite{hammond1} and adapting other known proofs to this setting.
\end{abstract}

\section{Introduction}

%Random permutations have been an area of study in probability theory for many years. Certain models have attracted great interest and have been shown to have connections to many other areas of probability. For example, Ulam's classic problem \cite{ulam} of the asymptotics of a uniformly chosen permutation of $\{1,\ldots, n\}$ culminated in the seminal work of Baik, Deift, and Johansson \cite{baik} --- a solution which relied upon connections to last passage percolation and brought light to a connection to random matrix theory as well.

Consider a collection of points scattered 
independently in a large three-dimensional torus so that any unit-volume region contains a unit order of points. For $T > 0$ given, 
the points follow for time~$T$ Brownian trajectories in the torus, with a short-range repulsive force continually acting between any pair of points. The system is conditioned on the collective return at time~$T$ of the particles to their starting
locations.  A random permutation is obtained by following the trajectory for time~$T$ of any given particle from its initial location. This mathematical {\em spatial random permutation} model is physically significant, as first recognised by Richard Feynman in~\cite{feynman}:
at higher values of time $T$, large cycles may be expected to form in the random permutation, with the reciprocal values $T^{-1}$ corresponding to lower temperatures at which gases such as helium form special states such as Bose-Einstein condensates or superfluids.

A simple mathematical model of spatial random permutations is the random stirring process, sometimes known as the random interchange model.   
 The model was introduced by Harris~\cite{harris}. It associates to a given graph $G = (V, E)$ a stochastic process $\big( \sigma_t : t \in [0,\infty) \big)$ which takes values in the space of permutations of the vertex set $V$. 
Each edge $e \in E$ is independently equipped with a Poisson process of rate one. 
We set $\sigma_0$ to be the identity permutation. 
 If the Poisson process on an edge $e=(v,w)$ rings at time $t$, we right-compose $\sigma_t$ with the transposition $(v,w)$, i.e., we swap $v$ and $w$ in the permutation process, in a right-continuous manner. 
The model is easily seen to be well defined on regular infinite graphs, or indeed if the maximum degree of the graph is finite. The formation of large cycles in $\sigma_t$ is a main topic of inquiry.

  B\'alint T\'oth~\cite{toth} showed that  the quantum Heisenberg ferromagnet
has    long range order and spontaneous magnetisation in a phase that corresponds to the appearance of macroscopic cycles in a variant of the random stirring model in which permutations are reweighted by a factor of two for each cycle.

%weighed with an extra factor of  $2^{\#\text{cycles}}$. 
%S\"ut\H o's work on the free case in \cite{suto1, suto2} showed that the transition point for the appearance of macroscopic cycles is the same as the particle density threshold for Bose-Einstein condensation. Some related developments are surveyed in \cite{survey1, survey2}.

T\'oth conjectured in the 1990s that the random stirring process on transient graphs (such as $\Z^d$ for $d \geq 3$) exhibit a critical point 
above which infinite cycles almost surely appear and below which they do not.
% i.e. there should exist a $T_c = T_c(d)$ such that $\sigma_t$ a.s. has an infinite cycle for $t>T_c$ and a.s. has no infinite cycles for $t<T_c$. This has not been rigorously shown for $d>1$ and is completely open; however, there is numeric evidence  \cite{numeric_study} for $d=3$.

When $d$ is high, the model on regular trees of degree $d$ may be expected to be similar to the model on $\Z^d$. 
Omer Angel \cite{angel} showed that on regular trees with degree at least five, there exists a certain bounded interval of times where $\sigma_t$ a.s. has an infinite cycle. The existence of a critical value for infinite cycles 
 was proved in \cite{hammond1, hammond2}.

%[A: next paragraph should probably mention history of the time reversal, Aizenman and Nachtergaele if that is right.]

Aizenman and Nachtergaele in \cite{aizenman} introduced a representation of the quantum Heisenberg 
antiferromagnet via a variant of the random stirring process in which a certain time reversal occurs when particles are transposed due to the ringing of Poisson clocks on the associated edge.
%Ueltschi has noted that 
%the quantum Heisenberg anti-ferromagnet may be represented by a variant of the random stirring process in which a certain time reversal occurs when particles are transposed due to the ringing of Poisson clocks on the associated edge.
Ueltschi introduced in \cite{ueltschi-reverse} a hybrid model in which ferromagnetic and antiferromagnetic effects are both present, and with Bj\"ornberg in \cite{ueltschi} analysed the new model when the underlying graph is a high degree tree; comparison with numerical evidence shows that the model is a very good surrogate for its $\Z^d$ counterpart.  The present article develops their work by proving a result that was very strongly suggested by their results: that the hybrid model has a critical point for the formation of infinite cycles.

%The problem with $\Z^d$ is roughly that the geometry of the graph makes its analysis difficult. Progress has been made on other graphs where geometry is not present or is much simpler. For instance, a model closely related to RSP on the complete graph on $n$ vertices was studied by Oded Schramm in \cite{schramm}, where he showed that if $cn$ random transpositions are performed for any $c>1/2$, then macroscopic cycles (i.e. cycles of side $\alpha n$) will be present with high probability. In addition, he also showed that the list of decreasing cycle sizes (suitably normalized) converges to the $\text{Poisson-Dirichlet}(1)$ distribution. (The Poisson-Dirichlet(1) distribution being the limiting cycle distribution in a uniformly random permutation.) The emergence of the macroscopic cycle after $cn$ transpositions was given a short proof by Nathana\"el Berestycki in \cite{berestycki1} and one using representation theory in \cite{berestycki2}. This analysis on the complete graph can be interpreted as the mean-field analogue of the question T\'oth was considering.

%Moving away from the complete graph, we first mention work done on the hypercube in \cite{hypercube} and on identities obtained for cycle lengths in general graphs in \cite{kozma}.

%Rigorous progress on T\'oth's conjecture on infinite graphs has thus far been on trees, where uniqueness of paths can be crucially used. It is expected that behavior on regular trees with high degree $d'$ is similar to that on $\Z^d$ for $d'=2d-1$. 

\subsection{The Cyclic Time Random Walk and its Modification}

In order to explain Bj\"ornberg and Ueltschi's results and how our paper develops them, it is useful to begin by recalling a random process which is a very close cousin of the random stirring process. This is the Cyclic-Time Random Meander (CyTRM); it is a slight variant of the cyclic-time random walk considered by Angel in~\cite{angel} and has been used in Angel and Hammond's analyses of the random stirring process on trees.

Recall that, when the random stirring process is specified, a graph $G = (V,E)$
is given. 
 The CyTRM is defined by fixing a  parameter $T \in (0,\infty)$, and 
associating to each edge of $G$ an independent Poisson process of rate one on $[0, T)$.
We may picture the graph's vertices 
as points in the plane, with a vertical pole rising to height $T$
above each of them. 
A horizontal bridge is placed between the poles rising from vertices $v$ and $w$ at any height at which the Poisson process for the edge $(v,w)$ rings; in this case we say the edge $(v,w)$ \emph{supports} a bridge; see Figure~\ref{fig.cytrm}.

\begin{figure}[h]
\centering
\begin{tikzpicture}[scale = 1.5]

%dashed lines for graph
\draw[dashed] (0,0) -- ++(-1.2, -1.5);
\draw[dashed] (0,0) -- ++(1.2, -1.5);

% three poles
\draw (0,0) -- ++(0, 1.5);
\draw (1.2,-1.5) -- ++(0, 1.5);
\draw[decorate,decoration={brace,amplitude=10pt, mirror},xshift=2pt,yshift=0pt] (1.2,-1.5) -- ++(0, 1.5)  node [black,midway,xshift=0.6cm] {$T$};
\draw (-1.2, -1.5) -- ++(0, 1.5);

\node[anchor = east] at (0,0) {$\phi$};

\fill (0,0) circle [radius=1pt];
\fill (1.2,-1.5) circle [radius=1pt];
\fill (-1.2,-1.5) circle [radius=1pt];

%two bridges
\draw (0,0.4) -- ++(1.2,-1.5);
\draw (0,1) -- ++(-1.2,-1.5);

\coordinate (left child) at (-1.2,-0.8) {};
\coordinate (future child) at (-2.4,-2.3) {};

\clip (-1.5,1.5) rectangle (1.6,-1.6);
\draw (left child.center) -- ($(left child)!0.5cm!(future child)$);
\draw[dotted] (-1.2,-0.8) -- (-2.4,-2.3);
%\draw[dotted] ($(left child)!0.5cm!(future child)$) -- ($($(left child)!0.5cm!(future child)$)! 0.35cm! (future child)$);
%
\end{tikzpicture}
\caption{ An illustration of the root $\phi$ and two offspring. The vertical lines are the poles, the dashed lines are the edges in the underlying graph, and the solid slanted lines are the bridges supported by the edges underneath at the heights dictated by the independent rate one Poisson processes associated with each edge.}\label{fig.cytrm}
\end{figure}
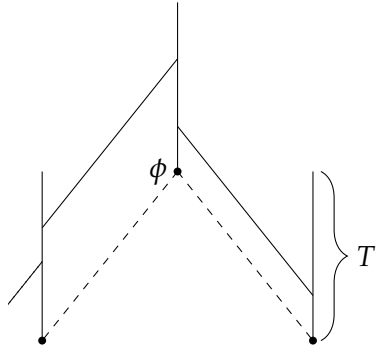
The CyTRM is a right-continuous random process $X$ mapping $[0,\infty)$
to $V \times [0,T)$ and may be depicted as a point moving in the union of the poles. If initially $X(0) = (v,0)$ for some $v \in V$, $X$ rises vertically at unit speed on the pole at $v$. If it encounters a bridge's intersection with this pole,
the process instantaneously jumps across the bridge, and then continues its unit speed ascent on the newly encountered pole. When, at time $T$, the point reaches the top of a pole, it immediately jumps to the base of the same pole. Vertical ascent then continues, so that the process' height at times $t \geq 0$ is the cyclic function $t \, \, {\rm mod} \, \, T$.
 
 The random stirring process $\sigma_T$ at parameter $T  \in  (0,\infty)$ is formed from cyclic-time random meander $X$  as the permutation on vertices induced by the  evolution of $X$ during the interval $[0,T]$. Formally, when  
 $X(0) = (v,0)$ for given $v \in V$, we have that $X(T) = \big( \sigma_T(v),0 \big)$. 
 The presence of an infinite cycle containing $v \in V$ in  $\sigma_T$ is characterised by the absence of return to its starting point by cyclic-time random meander with $X(0) = (v,0)$; see \cite{angel} for details.

 %transience of cyclic-time random meander, in the sense .

%The connection of $\cytrm{T}$ to $\sigma_T$ as follows: If the particle started at the bottom of the pole of $x\in V$ at time zero is at vertex $y$ at time $T$, then $\sigma_T(x) = y$. More important, however, is that $x$ is a part of an infinite cycle in $\sigma_T$ if and only if the $\cytrm{T}$ started at $x$ is transient, i.e. after some time never returns to $x$ with positive probability. This property is what enables us to analyze existence of infinite cycles via study of the $\cytrm{T}$. Thus, in this guise, the question of a critical point becomes about the existence of a $T_c$ such that CyTRW($T$) is transient for $T>T_c$ and recurrent for $T<T_c$. Here by recurrent we mean that the walk returns to the root infinitely often with probability 1.

The random loop model introduced by  Bj\"ornberg and Ueltschi in \cite{ueltschi} is a generalisation of cyclic-time random meander. Given a parameter $u \in [0,1]$, and the structure that specifies the meander,  independently assign to each bridge a Bernoulli random variable of parameter $u$. When the random variable equals one, the bridge is replaced by a {\em cross}; 
and by a {\em double bar} in the other case. In keeping with our previous terminology, we will refer to crosses and double bars collectively as bridges.
In this way, a collection of crosses and double bars connect poles at various heights. Associated to this system is an altered cyclic-time random meander, denoted by $X_{u,T}$, which is governed by similar rules as its precursor, 
with the behaviour of the meander when a cross is encountered being the same as when a bridge was encountered in the existing model. The difference is that, on jumping over a double bar, unit speed motion along the new pole occurs in the {\em opposite} direction to that adopted by the meander immediately before the jump; see Figure~\ref{fig.bridge-types}. As before, the model is well defined for graphs with bounded degree. When $u=1$, then, we recover the original model.

\begin{figure}[h]
\centering
\begin{tikzpicture}
[scale=0.9,
decoration={
markings,
% switch on markings
mark=at position 0.175 with {\arrow{stealth}},
mark=at position 0.4 with {\arrow{stealth}},
mark=at position 0.6 with {\arrow{stealth}},
mark=at position 0.85 with {\arrow{stealth}}}
]

\draw[semithick] (0,0) -- ++(0, -2) -- ++ (2,-0.15) -- ++(0,-2);
\draw[semithick,postaction={decorate}] (0,-4.15) -- ++(0, 2) -- ++ (2,0.15) -- ++(0,2);
\node[anchor = north] at (0,-4.15) {$v$};
\node[anchor = north] at (2,-4.15) {$w$};

\draw[semithick] (4,0) -- ++(0, -2) -- ++ (2, 0) -- ++(0,2);
\draw[semithick,postaction={decorate}] (4,-4.15) -- ++(0, 2) -- ++ (2,0) -- ++(0,-2);
\node[anchor = north] at (4,-4.15) {$v$};
\node[anchor = north] at (6,-4.15) {$w$};
\end{tikzpicture}
\caption{An illustration of a \emph{cross} on the left and a \emph{double bar} on the right, in each case connecting two vertices $v$ and $w$. The arrows indicate the path followed till time $T$ by a particle starting at $v$ and initially moving vertically. Note that though we have drawn arrows on the cross/double bar, in the model the particle crosses either type of bridge instantaneously.}\label{fig.bridge-types}
\end{figure}
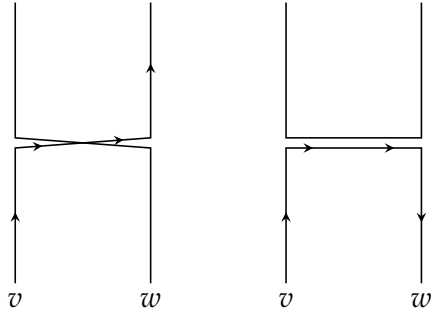

Supposing, as we will, that $G$ is rooted, we will call the meander $X_{u,T}$ {\em recurrent} if, when $X$ is begun with vertex component equal to the root, the process has probability one to visit its initial location at some positive time; in the other case, the meander will be called {\em transient}. For a regular rooted tree (indeed, for any connected graph of bounded degree), these two conditions are easily seen to be characterised by the almost sure presence, or respectively absence, of an infinite cycle in the associated random permutation $\sigma_T$. 

  Bj\"ornberg and Ueltschi proved that, on a regular rooted tree, each of whose vertices has $d$ offspring, there is a value $T_c = T_c(u,d) \in (0,\infty)$ which verifies
\begin{equation}\label{e.tcform}
T_c(u,d) = \frac1d + \frac{1-u(1-u)-\frac16(1-u)^2}{d^2} + o(d^{-2})
\end{equation}
  such that cyclic-time random meander $X_{u,T}$ is transient
when $T \in ( T_c , T_c + A d^{-2} )$  and recurrent when $T < T_c$.
Here, the parameter $A > 0$  is given, and the result is valid when $d$ exceeds a certain value that may depend on $A$. What Bj\"ornberg and Ueltschi demonstrate, then, is the presence of a critical value for the transition from recurrent to transient behavior, at least locally near the value. It remains possible in principle that recurrent behavior may be reestablished as $T$ increases over the putative critical value by an amount whose order exceeds $d^{-2}$.

As Bj\"ornberg and Ueltschi have noted, the coefficient of $d^{-2}$ in~(\ref{e.tcform}), viewed as a function of $u$, has interesting qualitative similarities with behavior witnessed in numerical studies of $\cytrm{u, T}$ on $\Z^d$. The shared features are illustrated in Figure~\ref{f.plot}: convexity, a minimum in $(0,1)$, and a higher value at $u=1$ than at $0$. See \cite{numeric_study} for details.

%\medskip
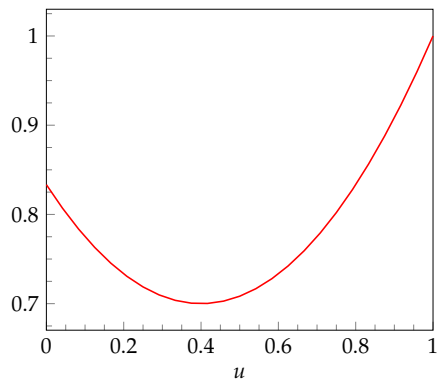
\begin{figure}[h]
\centering
\begin{tikzpicture}[scale=0.75]
    \begin{axis}[domain=0:1, xmin=0, xmax=1, minor tick num = 3, tick pos = left, xlabel=$u$]
    \addplot[no marks, thick, red]{1-x*(1-x)-(1-x)^2/6}; 
    \end{axis}
\end{tikzpicture}
\caption{A plot of $[0,1] \to [0,\infty): u \mapsto 1-u(1-u)-\frac16(1-u)^2$. The plot bears qualitative similarities to one obtained numerically in \cite{numeric_study} for the presumably critical $T$-value for $\cytrm{u, T}$ on $\Z^d$, namely convexity, a unique minimum in $(0,1)$, and a higher value at 1 than 0.}\label{f.plot}
\end{figure}

%A modification of this model was introduced and studied by Bj\"ornberg and Ueltschi in \cite{ueltschi}, where they identified the critical point $T_c$ given below. This model is the one we consider. This modified model is a slight generalization of the above, namely that certain bars (called ``double bars'') can be direction-reversing, while the rest (called ``crosses'') will be direction-maintaining (as before). This means that a particle which was initially moving upward when it encountered a double bar will move downward after crossing over, and vice versa. If it reaches the bottom of a pole, it will cyclically appear at the top and continue unit-speed downward motion. More precisely, the probability of a bar being a cross is $u$, and of being a double bar is $1-u$, independently. We will refer to this model as Variable Direction CyTRW, and will denote it by CyTRW($u, T$). Thus the original CyTRW corresponds to $u=1$.

\subsection{Main result}
As Bj\"ornberg and Ueltschi did, we will consider the graph $G$ to be a tree where each vertex has $d$ offspring.
(One result will be valid when each vertex has at least $d$ offspring.)
Our main result demonstrates that $T_c(u,d)$
is indeed the critical point for the transition from recurrence to transience.

%
%While \cite{ueltschi} identified the value of the critical point $T_c$, it only established transience of the $\cytrm{u,T}$ for $T$ in a limited range near the identified value, and not for arbitrarily large $T$. Thus, as in Angel's proof for CyTRW, in principle it could have been that the walk becomes recurrent for large values of $T$. The main contribution of this article is to dispense with this possibility and establish the transience for all values of $T$ above the critical value, and to give explicit values of $d$ for which it holds. Our main theorem is the following:

\begin{theorem} \label{main_thm}
\begin{enumerate}
\item
There exists $d_0 \in \N$ such that, if $G$
is a rooted tree of bounded degree each of whose offspring has at least $d_0$ offspring, and $u \in [0,1]$, then there exists $T_0 \in (0,\infty)$
such that
CyTRM($u,T$) is
transient when $T > T_0$.
In particular, we may take $d_0=16$ and $T_0=0.495$.
\item If $G$ is instead chosen to be a rooted tree each of whose offspring has exactly $d$ offspring, with $d\geq 56$ and $u\in[0,1]$, 
then there exists a $T_c = T_c(u,d)$ such that CyTRM($u,T$) is transient for $T>T_c$ and recurrent for $T<T_c$. 
The critical value $T_c = T_c(u,d)$
satisfies~(\ref{e.tcform}); it exceeds $d^{-1} + \frac12 d^{-2}$ for $d\geq 56$.
\end{enumerate}
%For sufficiently high $d$ there exists $T_c=T_c(u,d)$ such that CyTRW($u,T$) on trees with at least $d$ offspring at every vertex is recurrent for $T<T_c$ and transient for $T>T_c$. The value of $T_c$ is given by
%$$T_c(u,d) = \frac1d + \frac{1-u(1-u)-\frac16(1-u)^2}{d^2} + o(d^{-2}).$$
%
%Explicitly, for $d\geq 56$, there exists a $T_c>d^{-1}+\frac12d^{-2}$ such that CyTRW($u,T$) is transient for $T> T_c$ and recurrent for $T<T_c$.
\end{theorem}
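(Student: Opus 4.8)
Since CyTRM($u,T$) is by definition transient exactly when the return probability $r=r(u,T,d)$ --- the probability that the meander started at the root ever revisits its starting point --- is strictly less than one (and, as recalled above, this is equivalent to the a.s.\ appearance of an infinite cycle), it suffices to prove $r<1$ throughout the relevant range of $T$. The plan is to exhibit, inside a single realisation of the meander, an embedded Galton--Watson tree that is infinite with positive probability, with the property that an infinite ray in it forces the meander never to return. Starting from the root pole one records the distinct offspring subtrees into which the meander is diverted; each such subtree, rooted at a vertex $w$, in turn diverts the meander into further distinct subtrees, and so on. Because the meander is a single path on a tree, excursions into disjoint subtrees are genuinely independent, so this exploration stochastically dominates a Galton--Watson tree with some offspring generating function $f=f_{u,T,d}$ --- the offspring of a vertex being the distinct subtrees entered while the meander is on that vertex's pole, before it is forced either to return across the parent edge or to re-enter a subtree it has already explored. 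Then $f'(1)>1$ yields transience, and since a pole carries $\mathrm{Poisson}(dT)$ bridges leading to offspring against $\mathrm{Poisson}(T)$ leading to the parent, $f'(1)$ should be of order $dT$, which is why the transition sits near $d^{-1}$.

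The main difficulty is to make this comparison simultaneously valid and quantitatively sharp. Two phenomena must be controlled: the meander may return to a pole it has left and dive again into an already-explored subtree, so the relevant excursions are in reality dependent; and, on crossing a double bar, an excursion redeposits the meander on a pole moving in the opposite direction, altering which subsequent bridges it meets. I would absorb both into a genuine lower bound on $1-r$ by (i) a stopping rule that aborts the exploration at the first such re-entry, forfeiting any remaining escape chances, and (ii) a coupling that is pessimistic about the height and direction at which an excursion returns the meander to a pole; this leaves a Galton--Watson process with an explicit $f_{u,T,d}$. For part~(1) it then remains to check, uniformly in $u\in[0,1]$, that $f'_{u,T,d}(1)>1$ whenever $d\ge 16$ and $T\ge 0.495$; the constants $d_0=16$ and $T_0=0.495$ are precisely what make this elementary verification succeed, and it is arranged to hold for all $T\ge T_0$ since extra time only provides extra bridges to escape along.

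For part~(2) I would keep the construction on the $d$-regular tree but refine the coupling of step~(ii) so that its loss vanishes as $d\to\infty$, thereby establishing $f'_{u,T,d}(1)>1$ --- hence transience --- for every $T$ exceeding some $d^{-1}+O(d^{-2})$; the stronger requirement $d\ge 56$ is what this finer accounting costs. Bj\"ornberg and Ueltschi's theorem~\cite{ueltschi} then supplies what the escape estimate omits: recurrence for all $T<T_c$, and transience on $(T_c,T_c+Ad^{-2})$ for any prescribed $A$ once $d$ is large, which together cover all of $(T_c,\infty)$. Hence CyTRM($u,T$) is transient precisely for $T>T_c(u,d)$, this being the threshold of~\cite{ueltschi} and so satisfying~(\ref{e.tcform}). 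Finally, writing the coefficient of $d^{-2}$ in~(\ref{e.tcform}) as $g(u)=\tfrac56-\tfrac23u+\tfrac56u^2$, one has $\min_{u\in[0,1]}g(u)=g(\tfrac25)=\tfrac{7}{10}>\tfrac12$, so $T_c>d^{-1}+\tfrac12 d^{-2}$ follows once the $o(d^{-2})$ error in~(\ref{e.tcform}) is made explicit for $d\ge 56$, whether by tracking constants in~\cite{ueltschi} or via a direct first-moment recurrence estimate. Everything beyond the escape estimate of the first two paragraphs is bookkeeping; the escape estimate itself, with its re-entry and direction-reversal complications, is where the genuine work lies.
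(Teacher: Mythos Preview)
Your plan for part~(2) has a genuine gap: Bj\"ornberg and Ueltschi's result (Proposition~\ref{p.near critical point}) is asymptotic --- it guarantees recurrence below $T_c$ and transience on $(T_c,T_c+Ad^{-2})$ only once $d\ge d_0(A)$, with $d_0$ depending on $A$. You therefore cannot invoke it at the fixed value $d=56$, neither to supply recurrence, nor to bridge the interval between $T_c$ and wherever your escape estimate begins. The paper avoids this entirely: recurrence (and hence the bound $T_c>d^{-1}+\tfrac12 d^{-2}$) comes from the trivial percolation observation that $-\log(1-d^{-1})>d^{-1}+\tfrac12 d^{-2}$, not from tracking the $o(d^{-2})$ in~(\ref{e.tcform}); transience on $[d^{-1}+2d^{-2},\tfrac12]$ comes from Angel's good/uncovered-vertex construction (this is where $d\ge 56$ enters); transience on $[0.495,\infty)$ from a separate high-$T$ argument; and the remaining gap $(d^{-1}+\tfrac12 d^{-2},d^{-1}+2d^{-2})$ is handled by a Russo-type monotonicity result valid on $(d^{-1},d^{-1}+2d^{-2}]$. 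Bj\"ornberg--Ueltschi is used only for the asymptotic formula~(\ref{e.tcform}).

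Your part~(1) sketch is also not the paper's argument, and I do not see how to make it deliver the specific constants. A Galton--Watson exploration that aborts on first re-entry into an explored subtree is essentially Angel's method, and as the paper discusses, that method needs few bridges per pole and works only for $T$ of order $d^{-1}$, not for $T$ bounded away from zero. For $T\ge 0.495$ the paper instead tracks \emph{useful bridges} --- edges crossed once, with the parent pole only half-explored --- and shows their count, sampled at suitable stopping times, stochastically dominates a random walk on $\Z$ with increments $+(N-2)$ or $-2$; the explicit drift~(\ref{e.drift}) is then checked to be positive at $N=4$, $T=0.495$, $d=16$, and is monotone in $T$ beyond that. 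Your claim that ``extra time only provides extra bridges to escape along'' is precisely the monotonicity that is delicate here; the paper does not prove it globally, and instead verifies the drift directly for each $T\ge 0.495$.
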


%[A: The first part of the theorem has the problem that the process may not be well-defined due to finite time blowup if the degree increases rapdily as decrease from the root increases. Check that the other rephrasings are correct.]

\subsection{Method of proof: a patchwork of four pieces}

Theorem~\ref{main_thm} is a consequence of four techniques of proof that have been employed to investigate the problem. In order to offer an overall orientation to the reader, we summarise these four methods now, all of which have been employed thus far only in the random stirring case when $u=1$. We list them roughly in increasing order for the ranges of $T$ which the methods address.
The graph $G$ in question is the regular tree with offspring degree $d$.

\noindent{\bf I: Absence of large cycles via percolation.}
The first argument is very simple. If the pole height satisfies $T \in (0, \log \tfrac{d}{d-1})$, then the probability that a given edge supports either a cross or a double bar is less than $d^{-1}$, the bond percolation critical value for $G$. The meander remains among edges of a single such percolation component and is therefore recurrent.

\noindent{\bf II: Angel's argument, slightly above the critical value.} Angel specifies a local configuration 
which forces the meander away from the root. More precisely, he defines a local configuration such that if a particle encounters it, it will either never return to its current position (so its path is transient), or will move to an \emph{offspring} vertex where the local configuration has a chance of being repeated. Angel proves that the vertices enjoying the local configuration form a super-critical Galton-Watson tree, which, along with the above claim, gives transience.
The local configuration depends upon there being a low number of bridges on the pole in question; this event only has a reasonable probability for $T$ slightly above the critical point. As such, the argument can give transience only for $T\in [d^{-1}+(7/6+\varepsilon)d^{-2}, 1/2]$ for $d$ high enough (depending on $\varepsilon$).
%When [A: give range of $T$] values, and maybe condition on $d$; mention that these $T$ are just slightly above the critical value], the environment occupied by such configurations forms a supercritical Galton-Watson tree. The meander is seen to be recurrent for such values of $T$.

%[A: it is possible that a two or three sentence summary, say in the caption of a suitable sketch, could capture the essence of Angel's argument.]

\noindent{\bf III: Monotonicity around the critical value.} In \cite{hammond2}, it is argued that, for $T \in (d^{-1}, d^{-1}+2d^{-2}]$, meander transience at $T$ implies transience at any higher value on this interval. In brief, this is accomplished by proving a formula similar to Russo's formula from percolation theory \cite[Theorem 2.25]{grimmett-percolation} regarding the effect on the particle's trajectory of the placement,  uniformly at random, of a single extra bridge on the poles up to distance $n$ from the root.

\noindent{\bf IV: Large cycles, high above the critical value.} In \cite{hammond1}, an argument was presented for transience which works well at high values of $T$: for example, when $d\geq 39$ and $T \geq 429d^{-1}$. This relies on finding a favourable collection of bridges, called ``useful bridges'', whose probability of occurrence does not decay with $T$. The useful bridges serve two purposes: they are locations from where the particle enters unencountered territory (and hence independence comes to the aid of the analysis), and they are also obstacles which the particle must recross back to the root if it is to not be transient. The bulk of the proof is in establishing a linear rate at which useful bridges are generated, thereby showing that infinitely many are generated over the course of the trajectory with positive probability.

Bj\"ornberg and Ueltschi use a different argument in \cite{ueltschi}. We make no use of this argument, except in order to assert the asymptotic formula~(\ref{e.tcform}); the statement of their result is included below for completeness as Proposition \ref{p.near critical point}.

One of the roles of this article is the adaptation and simplification of argument IV as given in~\cite{hammond1} to the $u\neq 1$ setting. The simplification is achieved by the identification of an event which guarantees the generation of a given number of useful bridges; we will describe the event in \Cref{main_proof}. Another improvement is that we have obtained tighter bounds on $d$ for which the argument as a whole is applicable. We elaborate on both these points in greater detail at the end of Section \ref{proof_outline}.

%[A: one option that I quite like is to expand the four preceding bullet points so that th]

The proof of Theorem~\ref{main_thm}
uses the four arguments mentioned here in a patchwork manner, so that every value $T \in (0,\infty)$
is treated by at least one argument. 
Our task is to adapt the techniques to work in the case when $u \in [0,1]$ is not one.
Indeed, the next four-part result indicates the inference that we will respectively make from each adapted argument.

\begin{prop}\label{p.fourpart}
Let $G$ be a rooted infinite regular tree each of whose offspring has exactly $d$ offspring, and $X = X_{u,T}$ be CyTRM on $G$ with parameters $u\in [0,1]$ and $T\in (0,\infty)$. Then:
\begin{enumerate}
\item If $T\in(0,\log\frac{d}{d-1})$, $X$ is recurrent.

\item If $d\geq 56$ and $T\in [d^{-1} + 2d^{-2}, 4d^{-1}]$, or if $d\geq 9$ and $T\in[4d^{-1},1/2]$, then $X$ is transient.

\item Let $d\geq 26$ and let $T,T'$ be such that $d^{-1} <T <T' \leq d^{-1}+2d^{-2}$.  If $X_{u,T}$ is transient then so is~$X_{u,T'}$.

\item If $d\geq 16$ and $T\in[0.495,\infty)$, then $X$ is transient. This remains true if we relax our hypothesis to every vertex of $G$ having at least $d$ offspring.
\end{enumerate}
\end{prop}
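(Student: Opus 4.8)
The plan is to establish the four parts by adapting, to general $u\in[0,1]$, the four arguments I--IV described in the introduction, the expectation being that argument IV --- which yields part~(4), and in a bounded-$T$ form the upper sub-range of part~(2) --- carries essentially all of the difficulty. Part~(1) is the percolation argument~I with nothing to adapt: an edge supports a bridge of either type exactly when its rate-one Poisson process on $[0,T)$ is non-empty, which has probability $1-e^{-T}$, and $1-e^{-T}<1/d$ precisely when $T<\log\frac{d}{d-1}$; since $1/d$ is the bond-percolation threshold on a tree each of whose vertices has $d$ offspring, the component of bridge-supporting edges containing those incident to the root is a.s.\ finite, the meander from the root visits only the finitely many poles of that component, and on a finite bridge configuration the time-$T$ map is a permutation of a finite vertex set, so iterating it returns the particle to its start. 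The cross/double-bar distinction plays no role.

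Part~(3) adapts the Russo-type identity of \cite{hammond2}. Working with the meander confined to poles within graph-distance $n$ of the root and the return event $R_n$, one inserts a single extra bridge placed uniformly among these poles and computes the effect on $\P(R_n)$ of raising the intensity of such insertions; the variation is governed by the bridges pivotal for $R_n$, and in the regime $d\geq26$, $d^{-1}<T\le d^{-1}+2d^{-2}$ a typical pole carries only $O(1)$ bridges, which is what lets one sign the variation --- an inserted bridge can only move the particle away from the root --- after which $n\to\infty$ gives the claim. The new feature for $u\neq1$ is that the inserted bridge is independently a cross with probability $u$ and a double bar otherwise, so the pivotality analysis and the monotone coupling must be rechecked while tracking a double bar's direction reversal near the root; with the few-bridges-per-pole simplification already in force this should be careful bookkeeping rather than a real obstacle.

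Part~(2) on the sub-range $[d^{-1}+2d^{-2},4d^{-1}]$ adapts Angel's argument~II: one exhibits, near a vertex $v$, a pattern of a few bridges on the pole at $v$ such that a particle meeting it is either sent to transience or forwarded to a designated offspring where the pattern may recur independently, so that good vertices form a supercritical Galton--Watson tree. What needs care for $u\in[0,1]$ is that the forcing property survive the presence of double bars, and in particular still hold when $u=0$; I would arrange this by prescribing the \emph{types} as well as the positions of the bridges in the pattern --- paying a factor $u^{a}(1-u)^{b}>0$ that, once $dT=O(1)$, does not spoil supercriticality --- and checking the forwarding case by case. The hypothesis $d\geq56$ keeps $dT$ bounded on this sub-range; the complementary sub-range $[4d^{-1},1/2]$ is covered instead by argument~IV in its bounded-$T$ guise, where $T\geq4d^{-1}$ plays the role of $T\geq429d^{-1}$ in \cite{hammond1} and $T\leq1/2$ permits the weaker $d\geq9$.

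Part~(4) is argument~IV and the heart of the matter. As the particle explores, one records ``useful bridges'' --- bridges whose probability of occurrence is bounded below uniformly in $T$ --- each of which is simultaneously a gateway into unexplored subtree, restoring independence, and an obstacle that the particle must recross to return to the root; one shows useful bridges are generated at a positive linear rate along the trajectory, so infinitely many occur with positive probability, and then upgrades this to transience using the tree's branching. The simplification to be implemented is the identification of a single explicit event that guarantees the production of a prescribed number of useful bridges. I expect the main obstacle to be the adaptation of ``useful'' so that a double bar is an equally good obstacle --- here the direction reversal can even be turned to advantage --- together with the sharpness of the constants: one must lower-bound, uniformly over $u\in[0,1]$ and over all $T\geq0.495$, including arbitrarily large $T$ where poles carry many bridges, both the per-step probability of creating a useful bridge and the ensuing escape probability, which forces one to face the worst value of $u$ (plausibly interior, in view of Figure~\ref{f.plot}) and to argue economically throughout rather than with the slack afforded by the $T\geq429d^{-1}$ regime.
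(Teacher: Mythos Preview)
Your outline has the right architecture for parts~(1), (3) and~(4), but part~(2) contains a genuine gap, and you are missing the one structural observation that makes all the adaptations clean.

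The key tool you have not identified is Proposition~\ref{no direction conflict}: on a tree, a particle performing $\cytrm{u,T}$ can never revisit a pole location moving in the opposite vertical direction. This deterministic fact (proved by a short parity argument on double-bar crossings) is what lets the arguments for parts~(2) and~(4) go through \emph{without any dependence on $u$ in the probability estimates}. In particular, your concern in part~(4) about having to ``face the worst value of $u$'' is misplaced: the definition of a useful bridge, the frontier-departure bound, and the move-forward event all ignore bridge types entirely, and the resulting drift inequality~\eqref{e.drift} contains no $u$.

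For part~(2), two things go wrong. First, your proposed fix --- prescribing the \emph{types} of the bridges in Angel's local pattern and paying a factor $u^a(1-u)^b$ --- fails outright at the endpoints $u\in\{0,1\}$ unless $a=0$ or $b=0$, so it cannot give a bound uniform in $u\in[0,1]$. The paper's actual adaptation is different and simpler: Angel's ``good/uncovered'' pattern is specified by bridge \emph{positions} only, and the one step in Angel's proof that used $u=1$ (that an uncovered child is either reached or the trajectory is transient) is re-proved as Lemma~\ref{l.angel} using Proposition~\ref{no direction conflict}. No type prescription is needed. Second, the sub-range $[4d^{-1},1/2]$ with $d\geq 9$ is \emph{not} handled by argument~IV; both sub-ranges of part~(2) come from Angel's argument (see Theorem~\ref{angel theorem} and the reference to \cite[Lemma~B.2]{hammond1} for the explicit constants). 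Argument~IV in this paper is used only from $T\geq 0.495$.

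For part~(3), your sketch is broadly right, but the paper does not re-do the pivotality analysis from scratch; it cites the extension to $u\neq1$ already given in the appendix of~\cite{ueltschi}, and the new work here is a sharper estimate (Lemma~\ref{l.A1second}, summing a series exactly rather than bounding it crudely) to bring the threshold down to $d\geq 26$.
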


\begin{proof}[Proof of Theorem~\ref{main_thm}]
Part (1) follows immediately from Proposition \ref{p.fourpart} (4). For part (2), we simply write out the ranges guaranteed by Proposition \ref{p.fourpart} and check that they overlap.

\begin{itemize}
\item From Proposition \ref{p.fourpart} (1), we get recurrence for $T< \log\frac{d}{d-1}$, which implies the same for $T\leq d^{-1}+\frac12d^{-2}$.

\item From Proposition \ref{p.fourpart} (2), we get transience for $d\geq 56$ and
$$T\in[d^{-1}+2d^{-2}, 4d^{-1}],$$
as well as for $d\geq 9$ and
$$
T\in \big[ 4d^{-1}, \tfrac12 \big] \, .
$$

\item From Proposition \ref{p.fourpart} (4), we get transience for $d\geq 16$ and
$$T\in[0.495, \infty).$$

\item The range excluded by the above three bullet points is $(d^{-1} + \frac12d^{-2}, d^{-1}+2d^{-2})$. Now if $d\geq 26$, Proposition \ref{p.fourpart} (3) gives monotonicity in $(d^{-1}, d^{-1} + 2d^{-2}]$. Thus if we further have $d\geq 56$ for the second bullet point to apply, the existence of a critical $T_c > d^{-1} + \frac12d^{-2}$ is implied.
\end{itemize}

The claim regarding the asymptotic formula of $T_c$ is a just the formula from the result of Ueltschi and Bj\"ornberg in \cite{ueltschi} referenced above. We include it as the next  proposition without proof for the reader's convenience. This completes the proof of Theorem \ref{main_thm}.
\end{proof}

\begin{prop}[Theorem 1.1 of \cite{ueltschi}] \label{p.near critical point}
%Let $T=d^{-1} + \alpha d^{-2}$.
%Then there exists $d_0$ (independent of $\alpha$) such that for all $d \geq d_0$, there exists $\alpha_c(u, d)$ such that CyTRW($u,T$) is transient when $\alpha_c < \alpha < 1$ and recurrent when $\alpha<\alpha_c$. 
%Further, we have 
%$$\alpha_c(u, d) = 1 - u(1 - u) - \frac16(1 - u)^2 + o(1)$$
%as $d\to\infty$, uniformly in $u$.
Let $A>0$ be given. Then there exists a $d_0$, possibly depending on $A$, such that for $d\geq d_0$, there exists a $T_c = T_c(u,d)$ with the property that CyTRM($u,T$) is transient for $T\in (T_c, T_c+Ad^{-2})$ and recurrent for $T<T_c$. Furthermore, $T_c(u,d)$ satisfies \eqref{e.tcform}.
\end{prop}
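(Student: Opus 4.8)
Although we quote Proposition~\ref{p.near critical point} from \cite{ueltschi} rather than proving it, we sketch, for orientation, the strategy one would follow. The object of study is the order parameter $\theta = \theta(u,T,d) \in [0,1]$, the probability that CyTRM($u,T$) started from the root never returns to its starting point; transience of the meander is the assertion $\theta > 0$ and recurrence the assertion $\theta = 0$. Because the underlying graph is the rooted tree with $d$ offspring at every vertex and the bridge configurations on distinct poles are independent, the meander's trajectory has a self-similar branching structure. On first entering the subtree hanging from a vertex $v$ through the edge joining $v$ to its parent, the meander either escapes to infinity within that subtree or eventually recrosses that edge and exits; conditioned on entering, this dichotomy and the vertical direction of any exit are governed by a law that does not depend on the height of the entering bridge, by the translation invariance of the Poisson process along the cyclic pole, and whose behaviour for the two possible entry directions is exchanged by a reflection. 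Conditioning on the configuration of crosses and double bars along $v$'s pole, together with the (independent, identically distributed) escape-or-reflect outcomes of the $d$ subtrees hanging below $v$, yields a fixed-point equation $(a,b) = \Psi_{u,T,d}(a,b)$ for the pair $a = \mathbb{P}(\text{the meander is absorbed})$ and $b = \mathbb{P}(\text{the meander exits in its entry direction} \mid \text{it is not absorbed})$; the order parameter $\theta$ is then recovered by running the same computation on the root pole, which has no parent edge.

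The map $\Psi_{u,T,d}$ always fixes a point of the form $(0,b_0)$, corresponding to certain return and hence to $\theta = 0$. Linearising $\Psi_{u,T,d}$ in the first coordinate at such a point produces a multiplier whose leading part is the expected number of offspring subtrees the meander enters before first recrossing its parent edge; one shows that this multiplier is increasing in $T$ and passes through the value $1$ at a unique $T_c = T_c(u,d)$. For $T < T_c$, subcriticality of this linearisation together with monotonicity of $\Psi_{u,T,d}$ forces the finite-depth truncations of the recursion to converge to a fixed point with $a = 0$, so $\theta = 0$ and the meander is recurrent. For $T$ just above $T_c$ a nontrivial fixed point with $a > 0$ branches off; realising the physical order parameter as the monotone limit of the truncations identifies it with this branch, so $\theta > 0$ and the meander is transient. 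Provided the branch of nontrivial fixed points emerges toward larger $T$ --- which the expansion of the next paragraph confirms --- it persists throughout a window $(T_c, T_c + Ad^{-2})$, giving the asserted interval of transience.

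To extract~\eqref{e.tcform} one expands the criticality condition --- the multiplier above equals $1$ --- in powers of $d^{-1}$ in the regime $T = \Theta(d^{-1})$. To leading order the meander entering a pole crosses a Poisson number, of mean of order $Td$, of bridges, each opening onto a hitherto unvisited subtree, so the effective offspring count is of order $Td$ and criticality occurs at $Td \approx 1$, whence $T_c \sim d^{-1}$. The $d^{-2}$ correction, of relative size $d^{-1}$, collects contributions from configurations of two bridges on a common pole: the dependence on $u$ enters through the double bars, which occur with probability $1-u$ apiece and reverse the meander's vertical direction, forcing it to recross bridges already used on the same pole (an effect which also couples to the cyclic wrap-around at the top of the pole), while an elementary integral over the uniformly distributed positions of two such bridges supplies the constant $\tfrac16$. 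Collecting the terms in the form $T_c d = 1 + c(u)d^{-1} + o(d^{-1})$ gives $c(u) = 1 - u(1-u) - \tfrac16(1-u)^2$, which is~\eqref{e.tcform}; since $c(u) \geq \tfrac{7}{10}$ on $[0,1]$, this already shows $T_c$ exceeds $d^{-1} + \tfrac12 d^{-2}$ for all large $d$, although the explicit threshold $d \geq 56$ in Theorem~\ref{main_thm} instead comes from Proposition~\ref{p.fourpart}.

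The main obstacle is the combination of two difficulties. The first is structural: deriving the fixed-point equation rigorously requires controlling the possibility that the meander re-enters a subtree it has already visited --- so that the escape-or-reflect outcomes of the offspring subtrees are no longer cleanly independent --- and then identifying the physical order parameter with the bifurcating fixed point rather than with the trivial one. The second, where most of the work lies, is the $d^{-1}$-expansion of the criticality condition: one must track the order-one number of bridges on a pole together with all their pairwise interactions, the direction reversals caused by double bars, and the cyclic wrap-around, while bounding the remainder uniformly in $u \in [0,1]$ so that it is genuinely $o(d^{-2})$. It is exactly this bookkeeping that pins down the constant $\tfrac16$ and the quadratic-in-$(1-u)$ shape of the correction, and a sign computation within it that guarantees the bifurcation opens toward the transient side, yielding the full $Ad^{-2}$-window.
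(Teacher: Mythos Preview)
As you yourself note at the outset, the paper does not prove this proposition: it is quoted as Theorem~1.1 of \cite{ueltschi} and included, in the paper's own words, ``without proof for the reader's convenience.'' There is therefore no argument in the present paper against which to compare your sketch; on that count your proposal is accurate.

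What you supply beyond this is explicitly an orientation, not a proof, and as such it is a reasonable high-level account of a tree-recursive/bifurcation strategy, with the two main difficulties --- loss of independence upon re-entry into an already-visited subtree, and uniform-in-$u$ control of the $o(d^{-2})$ remainder in the expansion --- correctly flagged. One caveat worth recording: the two-parameter reduction to $(a,b)$, namely absorption probability and exit direction, is a genuine oversimplification. When more than one bridge lies on the edge joining $v$ to its parent, the meander may exit through a different bridge than the one it entered on, so the exit \emph{height} (not merely the direction) is needed to continue the trajectory on the parent pole; a rigorous version of the recursion must carry this extra data, or else argue separately that multiple bridges on a single edge contribute only at the order already absorbed into the error term. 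Since you present the sketch only as heuristic orientation and not as a proof, this is not a defect in your proposal so much as a point where the actual work in \cite{ueltschi} is heavier than the sketch suggests.
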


It remains, of course, to prove Proposition~\ref{p.fourpart}. %The first part is trivial and we do not comment further on it. The second and third parts are straightforward adapatations of their $u=1$ counterparts. We will not attempt to expose these arguments and in the final Section~\ref{}
%we indicate notational changes needed in [] and [] in order to furnish the proofs.
%
Leaving aside the trivial first argument, the work of adapting arguments II and III
is straightforward. We will not rewrite these arguments, but rather indicate the necessary changes to the original papers in the final \Cref{modifications}. \Cref{p.fourpart} (4) entails more substantial adaptation of the argument given in \cite{hammond1}. We choose to present a self-contained proof of this result, and do so next, in \Cref{main_proof}.

We conclude this section by stating and proving a proposition which will be required in proofs of several parts of \Cref{p.fourpart}. It states that the particle cannot move both vertically up and down (at different times) on any portion of a pole, in spite of the direction-switching double bars.

\begin{prop} \label{no direction conflict}
Suppose a particle performing $\cytrm{u,T}$ on a tree is at the position $(v, t)\in V\times [0,T)$ and moving upwards. Then at any future time at which it is again at $(v, t)$ it will be moving upwards, i.e. it cannot be present at the same location moving in the opposite direction.
\end{prop}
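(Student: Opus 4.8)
The plan is to recast the meander as a deterministic, time-\emph{invertible} flow on a state space that records not only the particle's location but also its current direction of travel, and then to exploit a time-reversal symmetry of this flow. Work with a fixed bridge configuration, which we may take to be generic (no two bridges at the same height, no bridge at height $0$ --- events of probability one). Let $\Omega := V \times (\R/T\Z) \times \{+1,-1\}$, a point $(v,h,\epsilon)$ meaning ``at height $h$ on the pole at $v$, moving up if $\epsilon=+1$ and down if $\epsilon=-1$'', where we have identified the top and bottom of each pole so that a pole is a circle. The meander becomes a one-parameter group $(\Phi_s)_{s\in\R}$ of bijections of $\Omega$: between bridge crossings $\Phi_s$ translates the height coordinate by $\epsilon s$ around the circle; on meeting a cross the vertex coordinate changes and $\epsilon$ is preserved; on meeting a double bar the vertex coordinate changes and $\epsilon$ is flipped. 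In a generic configuration these rules determine $\Phi_s$ for every $s\in\R$, backwards as well as forwards, with $\Phi_s\circ\Phi_{s'}=\Phi_{s+s'}$; the meander of the Proposition, started at $(v,0)$ moving up, is the forward orbit of $(v,0,+1)$.

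The heart of the argument is the identity
\begin{equation*}
\Phi_{-s}=\iota\circ\Phi_s\circ\iota\qquad(s\in\R),\qquad\text{where }\iota(v,h,\epsilon):=(v,h,-\epsilon),
\end{equation*}
which, since $\iota$ is an involution, says exactly that reversing the particle's motion retraces its path with reversed velocity. I would establish it by inspection, separately on a time interval containing no bridge crossing (trivial: running backwards for time $s$ while moving up agrees with running forwards while moving down) and across each of the three kinds of event --- a cross, a double bar, and the wrap-around at the top of a pole --- using the symmetry of the crossing rules drawn in Figure~\ref{fig.bridge-types}. For example, a double bar between $v$ and $w$ sends ``arrive from $v$ moving up'' to ``depart into $w$ moving down'', and the time-reverse of this transition is ``arrive from $w$ moving up'' to ``depart into $v$ moving down'', which is again one of the forward rules. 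This verification is purely a finite case check, but it is the step that needs genuine care: one must pin down the right-continuity conventions at the instantaneous bridge jumps so that $\Phi$ is honestly a group and the identity holds even when a jump occurs at the relevant instant. I expect this bookkeeping, rather than any conceptual difficulty, to be the main obstacle; nothing about the graph being a tree enters.

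Granting the identity, the conclusion is immediate. Suppose the particle is at $(v,t)$ moving up at time $s_1$ and again at $(v,t)$ at a later time $s_2$; set $r:=s_2-s_1>0$. Then $\Phi_r(v,t,+1)$ has position $(v,t)$, and we must only rule out that its direction is $-1$. If $\Phi_r(v,t,+1)=(v,t,-1)$, put $y:=\Phi_{r/2}(v,t,+1)$, so $\Phi_{r/2}(y)=(v,t,-1)$. Applying $\iota$ and using $\iota\circ\Phi_{r/2}=\Phi_{-r/2}\circ\iota$ gives $(v,t,+1)=\iota\bigl(\Phi_{r/2}(y)\bigr)=\Phi_{-r/2}(\iota y)$, hence $\iota y=\Phi_{r/2}(v,t,+1)=y$; but $\iota$ has no fixed point, a contradiction. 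So the particle is moving up at time $s_2$, as claimed. (Running $\iota$ in the other direction gives the companion statement that if it is moving down at $s_1$ it is moving down at every return, so in fact the direction of passage through any fixed point of a pole is the same at every visit.)
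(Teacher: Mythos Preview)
Your argument is correct and takes a genuinely different route from the paper's. The paper argues by induction on the number of bridges in the putative excursion from $(v,t,+1)$ to $(v,t,-1)$: peeling off the first bridge $(e,t')$ crossed after leaving $(v,t)$, one sees that the last bridge crossed before arriving at $(v,t,-1)$ must again be $(e,t')$, so the interior trajectory is a shorter excursion of the same type based at $(v',t'{\pm})$. Iterating shows every bridge---in particular every double bar---is crossed an even number of times, forcing the net number of direction flips to be even and giving the contradiction.

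Your approach instead encodes the meander as a reversible flow $\Phi$ on the state space with direction, proves the time-reversal identity $\Phi_{-s}=\iota\circ\Phi_s\circ\iota$, and then uses the neat ``midpoint'' trick: if $\Phi_r$ sent $(v,t,+1)$ to $(v,t,-1)$, the half-time image $y=\Phi_{r/2}(v,t,+1)$ would be an $\iota$-fixed point, which is impossible. Both proofs are graph-agnostic (neither uses the tree hypothesis). The paper's is more elementary and hands-on; yours is more structural, and the conjugacy $\Phi_{-s}=\iota\Phi_s\iota$ is a reusable fact about the model that makes the companion ``downward'' statement, and indeed the constancy of the direction at \emph{every} pole location, immediate. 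Your caveat about right-continuity conventions at bridge instants is well placed but routine: working on the full-measure set of generic configurations and of times $r$ at which no jump occurs suffices, and one can avoid the issue entirely by checking the identity on the quotient that identifies the two one-sided limits at each bridge.
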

\begin{proof}
Suppose to the contrary that the particle starting at $(v, t)$ and moving up returns to $(v, t)$ while moving down after tracing out some path. This means that the path between the two visits to $(v,t)$ is finite. 

Let $(e, t')$ be the first bridge encountered from cyclic motion upwards from $(v,t)$, connecting to $(v', t')$. Observe that for the particle to be at $(v,t)$ and moving downwards later, it must necessarily come by travelling across $(e, t')$ and then travelling downwards. Thus in our path we have a pairing between two trips across the bridge $(e, t')$. Now observe that we have another path from $(v', t'\pm)$ (depending on whether $(e,t')$ was a cross or a double bar) to itself with the initial and final directions opposite. By induction, we thus have that every bridge in the original path was traversed an even number of times.

If particular, double bars were traversed an even number of times, implying that the direction at $(v, t)$ finally must be the same as it was initially, a contradiction.
\end{proof}

%[A: next paragraph may now be redundant, but some of it could end up in assertions of Proposition~\ref{p.fourpart}.]

%The result is established in a patchwork manner with several distinct proofs covering different parts of the required range of $T$. First, the result of Bj\"ornberg and Ueltschi \cite{ueltschi} establishes that for large $d$, there exists $T_c$ as above such that for $T<T_c$, there the walk is recurrent, and for $T\in(T_c, d^{-1} + 2d^{-2})$ it is transient. (This uses an argument of Hammond \cite{hammond2} to give monotonicity of the infinite cycle's existence near the critical point, which works for $d\geq 26$; it is given here as \Cref{monotonicity}.) The next interval is covered by a simple modification of Angel's argument \cite{angel}, presented below as \Cref{angel theorem}, which gives transience for $T\in[d^{-1}+2d^{-2}, \frac12]$, for $d\geq 56$. The argument for the final infinite ray is presented below as \Cref{infinite ray}, and applies for $T\in[0.48, \infty)$, and works for $d\geq 11$.

%[A: regarding next paragraph, Theorem~\ref{infinite ray} is likely to end up being Proposition~\ref{p.fourpart}(4). The next paragraph is misplaced. It forms part of the next section if it is retained.]

\noindent{\bf Acknowledgment.} The first author thanks Daniel Ueltschi for useful discussions.

\section{Transience for high $T$ for $u\in[0,1]$}\label{main_proof}

%[A: this section should have a short descriptive title referring to what it does, which is adapt the high-$T$ argument for infinite cycles.]
We now turn to the main technical element of this paper, the proof of \Cref{p.fourpart} (4). Our argument bears strong similarities to that given in \cite{hammond1}, e.g., it uses the same notion of a useful bridge. However, the proof given here, apart from applying even when $u\neq 1$, is also simpler in certain ways. 
%Unlike \cite{hammond1}, we do not attempt to directly establish a quantitative linear rate at which useful bridges are encountered, but instead show that with positive probability an infinite number are encountered over the course of the particle's trajectory, which is sufficient for our goal. We do this by identifying a simple event which causes a fixed number of useful bridges to be encountered. 
The differences between our argument and \cite{hammond1}'s will be discussed at the end of the proof outline below.

%The part of \Cref{p.fourpart} that requires the most work is part (4). Our argument is very similar to that in \cite{hammond1}, and in fact uses the same notion of a useful bridge. It is, however, simplified and made to work for $u\neq 1$ here, at the cost of a higher left end point of the infinite ray and a higher $d$ for which it is applicable. After recalling the relevant definitions and supporting lemmas, the theorem is proved in \Cref{main_proof}. The modifications of results from other papers that are required (in order to apply them when $u\neq 1$) to get the complete claimed range is done in \Cref{modifications}. This will complete the proof of \Cref{p.fourpart}.

\subsection{Outline of Proof}\label{proof_outline}
We are trying to prove that for sufficiently large $T$, $\cytrm{u, T}$ escapes to infinity with positive probability. This is of course true if $T = \infty$, as the process is then just simple continuous-time random walk on a tree. Here the problem is that we do not necessarily have that each move the particle makes is independent of the past; if the particle returns to a portion of the environment it has already visited, its motion will be ``deterministic'' in that it is determined by the past.

However, each time the particle moves into unvisited territory it gets a new lease of independence which we can exploit. Our approach is to show that, with high probability, these ``frontier departures'' (\Cref{frontier time}) into new territory occur often enough, and detrimental returns to explored territory can be controlled. 
For the analysis of the occurrence of frontier departures we need the notion of useful bridges (\Cref{useful bars}) till a given time $t$, defined according to the particle's trajectory up to time $t$. An important property of these bridges is that their supporting edges have been crossed only once up till time $t$, and so by the tree geometry, if the particle is to be recurrent it must recross all edges supporting useful bridges on its journey back to the root---but useful bridges will be defined such that when the particle has the opportunity to make such a recross, it may instead make a frontier departure into new territory, an event whose probability is bounded below  in \Cref{frontier departure}.

If such a favourable frontier departure occurs, \Cref{ANT} identifies an event which leads to a fixed number of useful bridges being encountered immediately after, and gives a lower bound on its probability. 
If the particle's trajectory is not so favourable and a frontier departure is not made, \Cref{bad return} limits the damage done by bounding how many useful bridges can be lost, roughly speaking.
Our proof will conclude by showing that overall the number of useful bridges grows to infinity with positive probability, which implies that the particle escapes to infinity with positive probability. This is done by a comparison of the number of useful bridges with a suitable random walk on $\Z$ which is made to escape to $+\infty$.

In adapting certain arguments from \cite{hammond1}, we make use of \Cref{no direction conflict} in the proof of \Cref{frontier departure}.

Apart from adaptations, our proof also differs from \cite{hammond1} in ways that result in a shorter and simpler argument. 
%The first is that unlike \cite{hammond1}, we do not endeavour to establish a quantitative linear rate of growth for the number of useful bridges. Instead, we prove the weaker but sufficient fact that the number of useful bridges grows to infinity with positive probability over the particle's trajectory.
Notably, our lower bound on the number of useful bridges is obtained by introducing the \emph{move-forward} event $MF_{N,T}$ that the particle moves away from the root $N$ times consecutively within a time span of $T$ (in this interval we are guaranteed independence no matter the motion). This is a simple event which streamlines the analysis. In the proof of \Cref{p.fourpart} (4), we will be finding a lower bound on the probability of the occurrence of $MF_{N,T}$
in the immediate aftermath of a frontier departure. 
If the event occurs, the delay after the frontier departure at which it is confirmed to do so is a stopping time. The counterpart to this stopping time in~\cite{hammond1}
was a deterministic duration; this convenient new use of randomness is a source of simplification.

In terms of results, we obtain transience for $d\geq 16$ and $T\geq 0.495$, while \cite{hammond1} does so for $d\geq 39$ and $T>429d^{-1}$; however, as \Cref{r.N-choice} observes, we may also get a similar range for higher $d$ by picking parameters $T$ and $N$ differently.
%his is much easier than doing the estimation at deterministic times as in \cite{hammond1} where we have less information.

\subsection{Proofs}
Throughout this section our graph $G$ is an infinite tree where each vertex has at least $d$ offspring. We start by establishing some notation.

\begin{notation}
Given the parameter $T>0$, $\P_T$ will denote the probability measure with respect to which the rate one Poisson process on $V\times [0,T)$ is defined.

We will refer to a particle being on the pole of a vertex $v\in V$ at a height $t\in[0,T)$ by the coordinates $(v,t)$. We similarly refer to a bridge supported by an edge $e\in E$ at height $t\in[0, T)$ as $(e,t)$.

The CyTRM($u,T$) process started at $(\phi,0)$, where $\phi$ is the root, will be denoted by $X$, so that $X(t)\in V\times[0, T)$ is the position of the particle at time $t\in [0,\infty)$ and $X(0)=(\phi,0)$. $Y$ will denote the projection of $X$ onto the vertex set $V$, so that $Y(t)$ is the vertex whose pole $X(t)$ is at. We will adopt the intuitive notation that for any $t>0$,
$$X_{[0,t)} = \{X(s) \mid s\in[0,t)\},$$
with the obvious analogue for $Y$.

We use notation for two types of hitting times for $A\subseteq V\times[0,T)$:
$$H_A = \inf\{s\geq 0\mid X(s) \in A\} \quad \text{and}\quad H_{t,A} = \inf\{s\geq t\mid X(s) \in A\}.$$
Further, if $A=\{x\}$, we will replace $A$ in the above notation by $x$.

For an edge $e$ connecting vertices, $e^+$ will denote the vertex $e$ is incident to which is closer to the root and will be called the parent vertex of $e$. Similarly $e^-$ will denote the incident vertex further from the root, called the offspring vertex of $e$. We will refer to the parent vertex of any given vertex of the tree to mean the neighbour closer to the root, and likewise for offspring vertices. The graph distance metric will be denoted by $\mathrm{dist}$.
\end{notation}

We record a simple observation regarding the conditional distribution of the unexplored environment given the trajectory up to time $t$. We will make use of this lemma without comment in the sequel.

\begin{lemma}\label{l.conddist}
Let $t > 0$.
 Consider the law $\P_T$ given $X:[0,t] \to V \times [0,T)$. Let $\mathrm{Found}_t \subseteq E \times [0,T)$ %\hfff{forced} 
denote the set of bridges that $X$ has crossed during $[0,t]$, and let $\mathrm{UnExplored}_t\subseteq E\times [0, T)$ be all elements of $E\times[0, T)$ neither of whose endpoints lie in $X_{[0, t]}$. 
Then the distribution of the collection of bridges $\mathcal B$ given $X_{[0,t]}$ is given by $\mathrm{Found}_t \cup \mathcal B_{(t,\infty)}$, where $\mathcal B_{(t,\infty)}$ is a collection of bridges distributed as a Poisson process on $E\times [0,T)$ with intensity $\one_{\mathrm{UnExplored}_t}$ with respect to product Lebesgue measure.

%Let the set of time-$t$ {\em unencountered} bar locations $\ubl_t \subseteq E(\treeg) \times [0,T)$ 
%denote the set of bars $b \in E(\treeg) \times [0,T)$ neither of whose joints belongs to $X_{[0,t]}$. Then the conditional distribution of $\ladder$ is given by $\forced_t \cup \ladder_{(t,\infty)}$, where $\ladder_{(t,\infty)}$ is a random bar collection with Poisson law of intensity~$1\!\!1_{\ubl_t}$ with respect to Lebesgue measure on $E(\treeg) \times [0,T)$.
\end{lemma}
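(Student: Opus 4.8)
The statement to prove is Lemma~\ref{l.conddist}, the conditional-distribution description of the unexplored environment.

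\medskip

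The plan is to exploit the fact that the trajectory $X$ up to time $t$ is a \emph{deterministic} function of the bridge configuration (once the starting point $(\phi,0)$ is fixed): the particle rises at unit speed and jumps at bridges, so knowing the collection $\mathcal B$ of bridges determines $X|_{[0,t]}$ entirely. First I would make this determinism explicit and introduce the key structural observation: the path $X|_{[0,t]}$ depends only on those bridges that lie on the poles the particle has actually visited by time $t$ — equivalently, on the restriction of $\mathcal B$ to $\mathrm{Explored}_t \times [0,T)$, where $\mathrm{Explored}_t := \{v \in V : (v,s) \in X_{[0,t]} \text{ for some } s\}$ is the set of visited vertices. More precisely one wants: the event $\{X|_{[0,t]} = \gamma\}$, for a fixed admissible path $\gamma$, is measurable with respect to the bridges incident to vertices in $\mathrm{Explored}_t(\gamma)$, and on this event the found bridges $\mathrm{Found}_t$ are exactly the ones $\gamma$ crosses.

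\medskip

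The core of the argument is then the standard restriction/independence property of Poisson processes. Decompose $E \times [0,T)$ into three disjoint pieces according to how many endpoints of the edge lie in $\mathrm{Explored}_t$: edges with both endpoints explored, edges with exactly one endpoint explored, and edges with neither (the latter being $\mathrm{UnExplored}_t$, up to the height coordinate). Because the underlying graph is a tree and $X$ moves continuously, a technical point to check is that $X$ cannot have crossed a bridge supported by an edge with an \emph{unexplored} offspring-endpoint without that endpoint having been visited — so $\mathrm{Found}_t$ is supported on edges both of whose endpoints are explored; this is immediate since crossing a bridge $(e,s)$ puts the particle onto the pole of the far endpoint of $e$. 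Conditioning on $X|_{[0,t]} = \gamma$ is, by the previous paragraph, conditioning on a configuration-event depending only on the bridges incident to $\mathrm{Explored}_t(\gamma)$; by the independence of a Poisson process on disjoint regions, the law of $\mathcal B$ restricted to $\mathrm{UnExplored}_t$ is unaffected and remains a Poisson process of unit intensity (product Lebesgue) there, i.e. intensity $\one_{\mathrm{UnExplored}_t}$; and on $\mathrm{Found}_t$ the bridges are the deterministic set crossed by $\gamma$. Writing $\mathcal B_{(t,\infty)}$ for the restriction of $\mathcal B$ to $\mathrm{UnExplored}_t$ gives the claimed decomposition $\mathcal B = \mathrm{Found}_t \cup \mathcal B_{(t,\infty)}$ in distribution. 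One should note we are deliberately saying nothing about the conditional law on the ``partially explored'' edges (one explored endpoint) — the statement only asserts control of $\mathrm{Found}_t$ and of $\mathrm{UnExplored}_t$, and that is all the argument yields and all that is needed.

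\medskip

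The main obstacle is the measurability/determinism bookkeeping rather than any probabilistic subtlety: one must argue carefully that the stopped path and the set of explored vertices are genuinely functions of the bridge configuration restricted to already-explored poles, so that conditioning on the path is conditioning on an event in the $\sigma$-algebra generated by the bridge process on $\mathrm{Explored}_t \times [0,T)$ — a region disjoint from $\mathrm{UnExplored}_t \times [0,T)$. A clean way to handle this is to run the construction inductively over successive bridge-crossings (there are finitely many in $[0,t]$ almost surely, since the Poisson process on any finite pole-collection is locally finite and new poles are entered only via crossings), at each stage revealing only the bridges on the current pole below the relevant height; this exhibits $\mathrm{Explored}_t$, $\gamma$, and $\mathrm{Found}_t$ as built from finitely many such reveals and never touches $\mathrm{UnExplored}_t$. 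Once that is in place, the conclusion is just the Poisson restriction theorem. Since the lemma is used ``without comment in the sequel,'' a short proof at this level of detail is appropriate.
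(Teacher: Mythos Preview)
Your proposal is correct and follows essentially the same approach as the paper: the path is a deterministic function of the bridges along its trace, so conditioning on $X_{[0,t]}$ fixes $\mathrm{Found}_t$ and, by the independence property of Poisson processes on disjoint regions, leaves the law on $\mathrm{UnExplored}_t$ unaffected. The paper's proof is only two sentences to this effect; your version is considerably more detailed (and your inductive ``reveal'' description in the final paragraph correctly matches the paper's pole-point definition of $\mathrm{UnExplored}_t$, which is finer than the vertex-level decomposition you sketch in the middle paragraph).
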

\begin{proof}
It is obvious that $\mathrm{Found}_t$ is contained in $\mathcal B$ as it is known given $X_{[0,t]}$. From the independence property of Poisson processes it follows that the distribution of the remaining bars, i.e. those in $\mathrm{UnExplored}_t$, is unaffected.
\end{proof}

\begin{defn}[Useful bridges]\label{useful bars}
We define, for $t>0$, a set  $\U_t\subseteq E\times[0,T)$  of \emph{useful bridges at time~$t$}. A bridge $(e,s)\in E\times[0,T)$ belongs to $\U_t$ if %\vspace*{-0.3cm}
\begin{itemize}
\setlength\itemsep{0.8pt}
\item $H_{e^+}<H_{e^-} < t$,
\item $H_{e^-} - H_{e^+} < T/2$,
\item $\{\tilde t\in[0,t] : Y(\tilde t) = e^+\} = [H_{e^+}, H_{e^-})$, and
\item $\{\tilde t\in[0,t] : Y(\tilde t) = e^-\}$ is an interval with right endpoint strictly less than $t$.
\end{itemize}
\end{defn}

Thus, a bridge is useful at time $t$ if it has been crossed before that time, the particle has spent at most time $T/2$ at the bridge's parent vertex, has visited the parent and offspring vertices only once, and is not at the offspring vertex at time $t$.

%\begin{defn}(Return)
%Let $(e,s)\in \U_t$ for some $t>0$. We say $X$ makes a return to $e$ if $H_{t, e^-}<\infty$.
%\end{defn}

\begin{defn}[Frontier time]\label{frontier time}
A time $t>0$ is called a \emph{frontier time} if $Y(t)\not\in \{Y(s) \mid 0\leq s< t\}$, i.e., the vertex whose pole $X(t)$ is at has not been visited before time $t$. 
\end{defn}

\begin{defn}[Frontier departure]
Under $\P_T$ given $X:[0,t]\to V\times [0, T)$, if $(e,s)\in\U_t$ and conditional upon $H_{t,e^-} < \infty$, we say $X$ makes a \emph{frontier departure} from $e$ if after time $H_{t,e^-}$, at the moment of departing $\{e^+, e^-\}$, $X$ arrives at the pole of a vertex it has not visited before. 
\end{defn}

Note carefully that in the above definition we are considering the moment of departure from $\{e^+, e^-\}$, and not from $e^-$ alone; so the particle may go from $e^-$ to $e^+$ first and then depart to an unvisited vertex as part of a frontier departure.

%The following lemma gives the same conclusion as Lemma 2.6 of \cite{hammond1}. However, the proof needs an application of \Cref{no direction conflict} to be applicable to $u\neq 1$.

\begin{lemma} \label{frontier departure} %[Lemma 2.6 of \cite{hammond1}]
Let $t>0$. Consider the conditional distribution of $\P_T$ %[A: has $\P_T$ been defined?]
 given $X_{[0, t]}$. Let $(e,s)\in \U_t$ with $e^+\neq \phi$ be chosen measurably with respect to $X_{[0,t]}$, and condition further on $H_{t, e^-} < \infty$. Then the probability of making a frontier departure is at least
$\frac{d-1}{d+1}(1-e^{-(d-1)T/2})$.
\end{lemma}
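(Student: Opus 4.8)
The plan is to analyze what happens after the (conditional) return time $H_{t,e^-}$, using \Cref{l.conddist} to ensure a fresh supply of Poisson randomness in the unexplored region. The key point is that at time $H_{t,e^-}$ the particle is at the offspring vertex $e^-$, and by \Cref{no direction conflict} it arrives there moving in a determined direction; by the useful-bridge conditions the particle has previously visited $e^-$ exactly once, during the interval $[H_{e^-}, \cdot)$, so all bridges on $e^-$'s pole that the particle could encounter on a fresh ascent or descent (depending on its direction of arrival) lie in regions whose endpoints are not in $X_{[0,t]}$ except for the edge $e$ itself. I would first pin down which portion of the pole of $e^-$ the particle traverses before it either crosses a new bridge or reaches the edge to $e^+$: since the particle spent at most $T/2$ at $e^+$ before, the bridge $(e,s)$ together with the cyclic structure cuts the pole of $e^-$ into an arc of length at least $T/2$ that is free of previously-found bridges and that the particle must traverse (in the cyclic direction it is moving) before it can return across $e$.

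Next I would compute the probability of the two events that together force a frontier departure. Event one: on that bridge-free arc of length at least $T/2$, the Poisson process (restricted to the edge from $e^-$ to one of its offspring) places a bridge before the particle exits the arc. Since $e^-$ has at least $d$ offspring and at most one of the corresponding edges ($e$ itself is a parent edge, so in fact all $d$ offspring edges are unexplored) has been seen, the superposition of $d$ independent rate-one Poisson processes governs the first new bridge; the chance that at least one such bridge falls in an interval of length $T/2$ is at least $1 - e^{-(d-1)T/2}$ (using $d-1$ to be safe, discarding one offspring edge — possibly to account for the edge back to the parent or for a boundary subtlety in the cyclic wrap). Event two, conditional on the first: the first new bridge encountered is a \emph{cross} going to a fresh offspring (not a double bar, and not the edge toward $e^+$). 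Conditioned on which edge carries the first bridge, the cross/double-bar label is an independent $\Ber(u)$, but we do not want to lose a factor of $u$; instead I would argue that whether the bridge is a cross or a double bar, crossing it lands the particle on the pole of a never-visited offspring vertex, which is already a frontier departure — so event two is automatic and we only pay for the edge being an offspring edge rather than the parent edge $e$. Among the $\ge d$ relevant edges at $e^-$, the first bridge is equally likely (by symmetry of the i.i.d.\ Poisson processes) to be on any of them, giving conditional probability at least $\tfrac{d-1}{d}$ that it is not the already-crossed edge $e$; combined with a similar symmetry at $e^+$ in case the particle goes up to $e^+$ first, one gets the stated $\tfrac{d-1}{d+1}$ after bookkeeping the $(d+1)$-fold (one parent, $d$ offspring) local degree.

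I would organize the argument by conditioning on the direction in which the particle arrives at $e^-$ at time $H_{t,e^-}$: if it arrives moving so that its fresh ascent on $e^-$'s pole leads it away from the bridge $(e,s)$, then a length-$\ge T/2$ unexplored arc is immediately available on $e^-$'s pole and the particle makes a frontier departure as soon as the first new bridge on any of $e^-$'s $\ge d$ offspring edges appears in that arc. If it arrives moving toward $(e,s)$, then it re-crosses $e$ to $e^+$; but $e^+$ has $\ge d$ offspring and $\ge 1$ parent, the particle previously spent total time $< T/2$ there, so there is again an unexplored arc of length $> T/2$ on $e^+$'s pole (excluding the portion already traversed and the edge $e$), and a frontier departure occurs when the first new bridge on one of $e^+$'s other $d$ edges lands there. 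Either way the local branching is at most $d+1$, one edge is excluded, and an arc of length $\ge T/2$ is available, yielding the bound; I would present the weaker-looking but uniformly valid estimate $\tfrac{d-1}{d+1}\bigl(1 - e^{-(d-1)T/2}\bigr)$.

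The main obstacle I anticipate is the careful cyclic-geometry bookkeeping: verifying that, on whichever pole the particle makes its next move, there genuinely is an unexplored arc of length at least $T/2$ lying ahead of the particle in its (determined) direction of motion before it can recross any previously-found bridge, and in particular before it recrosses $e$. This uses all four clauses of \Cref{useful bars} — the single visits to $e^+$ and $e^-$, the $< T/2$ occupation time at $e^+$, and the fact that the particle is not at $e^-$ at time $t$ — together with \Cref{no direction conflict} to rule out the particle re-traversing a portion of a pole in the opposite direction and thereby hitting an old bridge "from behind." Once the geometry is nailed down, the probability estimate is a routine computation with the exponential distribution and a symmetry argument among the $\ge d$ i.i.d.\ Poisson processes at the relevant vertex.
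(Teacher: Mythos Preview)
Your central geometric claim is wrong: there is no guaranteed unexplored arc of length $\geq T/2$ on the pole of $e^-$. The useful-bridge condition $H_{e^-}-H_{e^+}<T/2$ bounds the occupation time at $e^+$, not at $e^-$; the fourth clause of \Cref{useful bars} only says the first visit to $e^-$ was a single interval, and that interval can have any length in $(0,T)$. So your Case~1 (the particle moves on $e^-$'s pole and meets a fresh bridge within a length-$\geq T/2$ window) does not go through, and you cannot extract the factor $1-e^{-(d-1)T/2}$ there. Relatedly, it is not true that ``all $d$ offspring edges of $e^-$ are unexplored'': exactly one offspring edge of $e^-$ has already been crossed (the edge along which the particle left $e^-$ during its single visit), which is why the symmetry factor is $(d-1)/(d+1)$ rather than $(d-1)/d$.

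The paper sidesteps the missing arc-length bound on $e^-$ by using a different dichotomy. Let $J$ be the event that, after time $H_{t,e^-}$, the particle reaches the bridge $(e,s)$ before any other bridge (and hence crosses to $e^+$). On $J^c$ the first bridge met lies in the unexplored part of $e^-$'s pole; conditioning on $J^c$ already supplies a bridge, so no arc-length estimate is needed, and by symmetry among the $d+1$ incident edges it goes to an unvisited neighbour with probability at least $(d-1)/(d+1)$ (two neighbours of $e^-$ are visited: $e^+$ and the one offspring previously departed to). On $J$ the particle lands on $e^+$'s pole, where the $T/2$ bound \emph{does} hold, yielding the factor $1-e^{-(d+1)T/2}$ times the same $(d-1)/(d+1)$ (visited neighbours of $e^+$ are $e^-$ and $p(e^+)$). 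Taking the worse of the two conditional bounds gives the lemma. Your direction-of-arrival split does not line up with this, and the case you call ``moving away from $(e,s)$'' is exactly where your argument breaks.
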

%[A: have the terms parent and child been defined?]

\begin{proof}
Since $(e,s)\in \U_t$, we have that $e^-$ has been visited by time $t$. By the conditioning that $H_{t,e^-}<\infty$, we note that on the particle returning to the pole at $e^-$, it can either stay on the pole till reaching $(e,s)$ (in which case it will jump back to $e^+$), or it can jump to another vertex before reaching the bridge $(e, s)$. Let $J$ be the {\em  jump} event that jumping to $e^+$ occurs, and let $J^c$ be the complementary event.

To simplify notation, let $\tau$ be the hitting time of $\{e^+, e^-\}^c$ after time $t$, i.e.,
$$\tau = H_{t,\{e^+,e^-\}^c}.$$

We analyse the case where $J^c$ occurs first. Since $(e,s)\in \U_t$, $H_{t,e^-}$ is the time of first return of the particle to the pole at $e^-$. So by \Cref{no direction conflict}, the particle is travelling in an unexplored portion of the pole. Further, since $J^c$ occurring is equivalent to there being a bridge on $e^-$ different from $(e,s)$, we only need  consider where it connects to: obviously there are $d-1$ choices of unexplored vertices out of $d+1$ neighbours, and thus we have
$$\P_T\left(Y(\tau) \not\in Y_{[0,\tau)} \mid X_{[0,t]}, J^c\right) \geq \frac{d-1}{d+1}.$$
Now suppose $J$ occurs, i.e., the particle travels back to $e^+$ via the bridge $(e,s)$. Again by the definition of $\U_t$ and \Cref{no direction conflict}, the particle travels in the direction of unexplored area on the pole at $e^+$. The unexplored portion of the pole has length at least $T/2$ since $(e,s)\in U_t$ implies the explored interval has length at most $T/2$. So, conditioned on there being at least one bridge in this unexplored portion, we need to consider the probability that it connects to an unexplored vertex. Doing so and multiplying by the probability of the conditioning event, 
$$\P_T\left(Y(\tau) \not\in Y_{[0,\tau)} \mid X_{[0,t]}, J\right) \geq \frac{d-1}{d+1}\left(1-e^{-(d+1)T/2}\right).$$
Combining the above two gives the lemma.
\end{proof}

In the next lemma we define the {\em move-forward} event $MF_{N,T}$ described earlier and get a lower bound on its probability. This event is the main source of simplification of our proof in comparison to \cite{hammond1}. Though in some sense it is quite a crude event, its job is to generate useful bridges, and it turns out that this is sufficient for our purpose.

\begin{lemma} \label{ANT}
Let $X$ be a $\cytrm{u,T}$ started at $(v,t_0)$ and $MF_{N,T}$ be the \emph{move-forward} event that the particle goes forward at least $N$ times consecutively in the time interval $(0,T)$. On $MF_{N,T}$, let $\tau$ be the random time at which the $N$\textsuperscript{th} consecutive bridge is crossed, i.e.,  $\tau = \inf\{s\geq 0 \mid \mathrm{dist}(Y(s), v) = N\}$. Then we have
$$\P_T(MF_{N,T}) \geq \left(1-\frac{1}{d+1}\right)^N\left[1- e^{N-(d+1)T}\left(\frac{(d+1)T}{N}\right)^N\right] =: p^{(1)}_{N,T,d}.$$
Further, on the event $MF_{N,T}$, $|\mathcal U_\tau| \geq N-2$ a.s. Also, if we condition on $t$ being a frontier time and on $X_{[0,t]}$, the above event with the time interval $(0,T)$ replaced by $(t,t+T)$ occurs with the same probability, and on that event $|\U_{\tau_t}| \geq N-2$ a.s., where $\tau_t = \inf\{s\geq t \mid \mathrm{dist}(Y(s), Y(t)) = N\}$.
\end{lemma}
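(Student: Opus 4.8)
\emph{Setup and reduction.} The plan is to prove the unconditional and the frontier-time statements together. By \Cref{l.conddist}, conditioning on $X_{[0,t]}$ for a frontier time $t$ leaves the particle at $(Y(t),\text{height of }X(t))$ facing an environment which, on the offspring subtrees of $Y(t)$ and on the not-yet-visited heights of the pole of $Y(t)$, is a fresh rate-one Poisson process; this is exactly the situation at time $0$ for a walk started at $(v,t_0)$, so the two probability bounds coincide and both useful-bridge bounds follow from the same computation, which I carry out for the unconditional case, replacing the time $0$ by $t$ in the conditioned one. Write $v = v_0, v_1, v_2, \dots$ for the vertices the particle would occupy in succession if it only ever moved forward; $e_j = (v_{j-1}, v_j)$ for the edge crossed at the $j$-th forward move; $\tau_j$ for the time of that crossing (with $\tau_0 = 0$, resp.\ $\tau_0 = t$); $h_j$ for its height; and $\xi_{j-1} = \tau_j - \tau_{j-1}$ for the arc length traversed on the pole of $v_{j-1}$ before crossing $e_j$. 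Throughout I assume $v$, hence each $v_{j-1}$, is not the root $\phi$ --- automatic in the applications, since after a frontier departure (and at a frontier time) the particle sits at a vertex not visited at time $0$.

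\emph{The probability bound.} Observe that $MF_{N,T}$ is exactly the event that each of the first $N$ bridge-crossings is forward and that $\tau_N - \tau_0 \le T$. Reveal the environment one pole at a time along the forward chain. Having reached $v_{j-1}$, the key estimate is that, conditionally on the past and for every $x \ge 0$,
\[
\P_T\big(\text{$j$-th crossing is forward and } \xi_{j-1} \le x\big) \ \ge\ \tfrac{d}{d+1}\big(1 - e^{-(d+1)x}\big).
\]
This follows from a competing-hazards computation: forward bridges occur at the constant hazard $k \ge d$, where $k$ is the number of offspring edges of $v_{j-1}$ (all unexplored), while the edge towards the parent contributes additional hazard at most $1$ (and none on stretches already known to be empty), so the left side is at least $\tfrac{k}{k+1}\big(1 - e^{-(k+1)x}\big) \ge \tfrac{d}{d+1}\big(1-e^{-(d+1)x}\big)$; whether the crossed bridge is a cross or a double bar is irrelevant, as it only reverses the direction of travel on the new pole. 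From this estimate one constructs, step by step from disjoint portions of the Poisson process, an i.i.d.\ sequence $(\eta_j, B_j)_{j=1}^N$ with $\eta_j \sim \mathrm{Exp}(d+1)$ independent of $B_j \sim \Ber\big(\tfrac{d}{d+1}\big)$, coupled so that $B_j = 1$ forces both the $j$-th crossing to be forward and $\xi_{j-1} \le \eta_j$; hence $\{B_1 = \dots = B_N = 1\} \cap \{\sum_{j=1}^N \eta_j \le T\} \subseteq MF_{N,T}$. By independence and Gamma--Poisson duality,
\[
\P_T(MF_{N,T}) \ \ge\ \Big(1 - \tfrac1{d+1}\Big)^{\!N}\, \P\big(\Pois((d+1)T) \ge N\big),
\]
and the Chernoff bound $\P(\Pois(\lambda) \le k) \le e^{k-\lambda}(\lambda/k)^k$ for $k \le \lambda$ (for $N > (d+1)T$ the asserted bound is $\le 0$, hence trivial) upgrades the right side to $p^{(1)}_{N,T,d}$.

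\emph{The useful-bridge count.} On $MF_{N,T}$ the particle occupies $v_j$ precisely during $[\tau_j, \tau_{j+1})$ for $j = 0, \dots, N-1$ and has only just reached $v_N$ at time $\tau := \tau_N$; using in the conditioned case that $v_0 = Y(t)$ and each $v_j$ is unvisited before $t$ --- this is where frontier-ness enters --- one has $H_{e_j^+} = \tau_{j-1}$, $H_{e_j^-} = \tau_j$, and $\{\tilde t \in [0,\tau] : Y(\tilde t) = v_{j-1}\} = [\tau_{j-1}, \tau_j)$. I would then check the four conditions of \Cref{useful bars} for the bridge $(e_j, h_j)$, $1 \le j \le N$: the first, $\tau_{j-1} < \tau_j < \tau$, holds for $j \le N-1$ and fails only for $j = N$; the third reads $[\tau_{j-1},\tau_j) = [H_{e_j^+}, H_{e_j^-})$ and always holds; the fourth says the visit to $v_j$ is a single interval and the particle is not at $v_j$ at time $\tau$, which holds for $j \le N-1$ since $Y(\tau) = v_N$; and the second, $\xi_{j-1} < T/2$, fails for at most one index $j$, because $\sum_{i=0}^{N-1}\xi_i = \tau - \tau_0 \le T$ together with the a.s.\ positivity of the $\xi_i$ prevents two of them from being $\ge T/2$. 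Thus $(e_N,h_N)$ is the only crossed bridge that must fail, at most one further one fails, and the remaining $\ge N-2$ among the distinct bridges $(e_1,h_1), \dots, (e_N,h_N)$ lie in $\U_\tau$; so $|\U_\tau| \ge N-2$ a.s., and likewise $|\U_{\tau_t}| \ge N-2$ a.s.\ in the conditioned case.

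\emph{Main obstacle.} The technically most delicate point is the coupling in the second step: one must deal with the cyclic geometry of the poles (so that on the relevant event the ``next bridge in the direction of travel'' really is a fresh Poisson point and not the just-crossed bridge re-encountered after wrapping around the pole) and must verify that the per-step data --- the arc length $\xi_{j-1}$ together with the identity of the edge crossed --- is read off from disjoint portions of the Poisson process, so that the constructed pairs $(\eta_j, B_j)$ are genuinely i.i.d.; the competing-hazards estimate above, rather than a bare stochastic domination, is what makes the coupling go through uniformly, in particular in the frontier case where the pole of $Y(t)$ may already be partly explored (but only as known-empty of bridges towards the parent). A minor separate subtlety is the behaviour exactly at time $\tau$: the particle has only just entered $v_N$ and only just left $v_{N-1}$, which is precisely why two (rather than one) of the $N$ crossed bridges can fail to be useful and why the fourth condition of \Cref{useful bars} must be read as ``the particle has left $e^-$ by time $t$'' rather than as a strict inequality between $t$ and the supremum of the visit interval.
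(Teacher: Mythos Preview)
Your proposal is correct and follows essentially the same route as the paper's own proof: factor the move-forward event into ``each step is forward'' times ``the $N$ waiting times sum to at most $T$'', bound the first by $(d/(d+1))^N$ and the second via a Chernoff/exponential-Markov bound on a sum of $N$ i.i.d.\ $\mathrm{Exp}(d+1)$ variables, and then count useful bridges by checking the four clauses of \Cref{useful bars} along the forward chain. Your competing-hazards/coupling paragraph is a careful unpacking of what the paper compresses into the single phrase ``by iteratively conditioning on moving forward one step and using the independence obtained by moving forward''; it is more explicit but not a different idea. Likewise, your observation that the fourth clause must be read as ``the particle has already left $e^-$ at time $t$'' (so that $e_{N-1}$ does qualify while $e_N$ does not) matches the paper's intended reading and is exactly why only one bridge is excluded on that count.
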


\begin{proof}
Recall that the gap distribution of a Poisson process of parameter $d+1$ is Exp($d+1$). Let $\xi_1,\ldots, \xi_N$ be iid Exp($d+1$) random variables. By iteratively conditioning on moving forward one step and using the independence obtained by moving forward (regardless of vertical direction of motion), we obtain
\begin{equation}\
\P_T(MF_{N,T}) \geq \left(1-\frac{1}{d+1}\right)^N\cdot \P(\xi_1+\ldots + \xi_N \leq T).
\end{equation}
We need an upper bound on $\P_T(\xi_1+\ldots + \xi_N > T)$. Exponentiating, using the Markov inequality, and recalling that the moment generating function of Exp($d+1$) is given by $f(\lambda) = \frac{d+1}{d+1-\lambda}$, we get
%
%$$\P(\xi_1+\ldots + \xi_N > T) \leq \frac{\E[e^{\lambda \xi}]^N}{e^{\lambda T}},$$
%
%where $\lambda>0$ is a parameter to be optimized and $\xi \sim \text{Exp}(d+1)$. Recall that the moment generating function of Exp($d+1$) is given by $\E[e^{\lambda \xi}] = \frac{d+1}{d+1-\lambda}$. Thus we get
%
$$\P_T(\xi_1+\ldots + \xi_N > T) \leq e^{-\lambda T}\left(\frac{d+1}{d+1-\lambda}\right)^N.$$
This is minimised when $\lambda = d+1-N/T$, which gives
$$\P_T(\xi_1+\ldots + \xi_N > T) \leq e^{N-(d+1)T}\left(\frac{(d+1)T}{N}\right)^N.$$
Substituting back in (1) implies the claimed lower bound. From the definition of $\U_\tau$, it follows that the last bridge is not in $\U_\tau$ as the particle has not yet left the offspring vertex of the last bridge. Of the remaining $N-1$ bridges, we must exclude any where the particle spent more than $T/2$ time at the offspring vertex before jumping; however, this can happen at most once in a time interval of length $T$.

The fact that the same is true in the time interval $(t, t+T)$ when conditioned on $t$ being a frontier time is straightforward, since the particle is in unexplored territory. More precisely, conditional on $X_{[0,t]}$ and $t$ being a frontier time, \Cref{l.conddist} implies that the distribution of the bridge locations on the pole of $Y(t)$ remains unchanged, and so the above argument applies directly.
\end{proof}

%The following lemma is listed as Lemma 2.14 in \cite{hammond1}, and its proof carries over without modification to our setting. We include it here for completeness. 
The next lemma considers the situation where the particle returns and does not make a frontier departure, so that Lemmas \ref{frontier departure} and \ref{ANT} do not apply. Its role is to control the damage in this situation by bounding the number of useful bridges, which may be viewed as obstacles to the particle returning to the root, that can be undone.

\begin{lemma} \label{bad return}
Let $t>0$, and $e\in \mathcal U_t$ be the bridge last crossed in $X_{[0,t]}$. Let $p(e^+)$ be the parent of $e^+$. Then, conditionally on $e^+\neq \phi$ and $H_{t,p(e^+)} < \infty$, we have that $|\mathcal U_t\setminus \mathcal U_{H_{t, p(e^+)}}| \leq 2$ a.s. 	 	
\end{lemma}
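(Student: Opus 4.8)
The plan is to control which vertices the particle can visit during the window $(t,H_{t,p(e^+)}]$ and then to show, by the tree geometry, that only two bridges of $\mathcal U_t$—roughly $e$ itself and a bridge incident to $e^+$ on the root side—can cease to be useful over that window.

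First I would locate the particle at time $t$ and bound its range of motion afterwards. Since $e\in\mathcal U_t$ is the last bridge crossed, the third and fourth bullets of \Cref{useful bars} force the particle's only visit to $e^+$ in $[0,t]$ to be the interval $[H_{e^+},H_{e^-})$, ending with the step across $e$ into $e^-$, and its only visit to $e^-$ to be an interval that has already closed before time $t$; in particular the fourth bullet forbids a return to $e^-$, so after leaving $e^-$ the particle descends into the subtree rooted at $e^-$ and still lies there at time $t$. Because $H_{t,p(e^+)}$ is the \emph{first} return to $p(e^+)$, the particle cannot leave the subtree rooted at $e^+$ before that time, touching $p(e^+)$ only at the very endpoint; hence every vertex visited during $(t,H_{t,p(e^+)}]$ lies in the subtree rooted at $e^+$ or equals $p(e^+)$. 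Next I would observe that a bridge $g\in\mathcal U_t$ can drop out of $\mathcal U_{H_{t,p(e^+)}}$ only if $g^+$ or $g^-$ is visited again during $(t,H_{t,p(e^+)}]$: the first two defining conditions in \Cref{useful bars} are monotone in time, while the last two are destroyed precisely by a revisit to the parent or offspring vertex. Combined with the previous step, a lost bridge must therefore have an endpoint in the subtree rooted at $e^+$ or equal to $p(e^+)$, and the problem reduces to enumerating the useful bridges with that property.

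This enumeration is the step I expect to be the real obstacle. The bridge $e$ is one candidate. There is no useful bridge with an endpoint strictly below $e^-$, since its supporting edge would have been crossed strictly later than $e$'s, against the last-crossed hypothesis. A useful bridge with an endpoint at $e^+$ other than $e$ can only be the one on the edge joining $e^+$ to $p(e^+)$, because the third bullet of \Cref{useful bars} applied to $e$ pins the single visit to $e^+$ down to $[H_{e^+},H_{e^-})$, whose terminal step goes to $e^-$, so the particle never crossed from $e^+$ into a sibling of $e^-$; and any useful bridge with an endpoint at $p(e^+)$ is controlled by the same rigidity, now read off from whichever such bridge is itself a member of $\mathcal U_t$, which excludes bridges dangling off the other children of $p(e^+)$ (and off $p(p(e^+))$). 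Tallying the cases leaves at most two lost bridges. The delicate part throughout is exactly this book-keeping: ruling out useful bridges attached to side-branches at $e^-$, $e^+$, and $p(e^+)$, for which one must repeatedly exploit the rigidity packaged into \Cref{useful bars}—single, contiguous visits to parent and offspring vertices, each ending with a prescribed exit step—together with the last-crossed hypothesis, which is precisely what certifies both that the particle sits below $e^-$ at time $t$ and that nothing useful lies deeper in the tree than $e$.
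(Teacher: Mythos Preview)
Your plan is the paper's plan: confine the particle during $(t,H_{t,p(e^+)}]$ to the subtree rooted at $e^+$ (touching $p(e^+)$ only at the terminal instant) and then enumerate which members of $\U_t$ can have an endpoint revisited in that window. The paper compresses the enumeration via two observations you do not make. First, for every $f\in\U_t$ the particle at time $t$ lies strictly below $f^-$ (by the third and fourth bullets of \Cref{useful bars}), so any revisit to $f^+$ is preceded by a revisit to $f^-$; hence one only has to ask which offspring vertices $f^-$ can be visited, collapsing your two-endpoint bookkeeping to one. Second, the paper works over the \emph{open} interval $(t,H_{t,p(e^+)})$, explicitly noting the endpoint is excluded; this is exactly what dispatches the edge $g=(p(p(e^+)),p(e^+))$, whose offspring vertex $g^-=p(e^+)$ is reached only at the terminal time and not before.

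That edge is where your argument wobbles. In your last paragraph you assert that the bridge on $(p(p(e^+)),p(e^+))$ is ``excluded,'' but by your own criterion---an endpoint visited in the \emph{closed} interval $(t,H_{t,p(e^+)}]$---its offspring endpoint $p(e^+)$ \emph{is} visited (at time $H_{t,p(e^+)}$ itself), so it is a live candidate for loss and your ``same rigidity'' reasoning does not rule it out. To make your enumeration close with only two lost bridges you need the paper's refinement: drop the endpoint and track only revisits to $f^-$ in the open interval.
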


\begin{proof}
We write $\underline \U_t$ for the set of edges that support a bridge in $\U_t$. Note that for each $t > 0$, these two sets are in one-to-one correspondence by the definition of $\U_t$: no two elements in $\U_t$ can be supported on the same edge. Hence, it suffices to derive the statement of the lemma with $\U_t$ replaced by $\underline \U_t$.

The fourth requirement in the definition of $\smash{\U_{H_{t,p(e^+)}}}$ and $\U_t$ gives that the only way an edge $f\in\underline \U_t$ will not be in $\smash{\U_{H_{t,p(e^+)}}}$ is if the particle visits $f^-$ in $(t, \smash{H_{t,p(e^+)}})$. Note that $\smash{H_{t,p(e^+)}}$ is not included. Now by the tree's geometry, the only edges of $\underline \U_t$ whose children may be visited in $(t, H_{t,p(e^+)})$ are $e$ and $(p(e^+), e^+)$. Thus the lemma follows.
%If we enumerate elements of $\overline U_t=(e_1, \ldots , e_k)$ in the order in which they are crossed by $Y$, then the list $(e_1^-,\ldots, e_k^-)$ has the property that each entry is a descendant of each of its precursors. By definition, $e = e_k$. For $1 \leq i \leq k - 1$, note that $\inf\{s\leq t: e_i \in \overline \U_s\} = H_{t,e_i^-}$. Note also that if $H_{t,e_i^-} < \infty$ then $e_i \in \overline \U_{H_{t,e_i^-}}$. This is because the fourth requirement in \Cref{useful bars} of $\{\U_t: t \geq 0\}$ is chosen so that a given bar leaves this process only momentarily after a return by $Y$ to the offspring vertex of the edge supporting this bar. Thus, $\overline\U_t \setminus \overline\U_{H_{t,p(e^+)}}$ may contain no edges other than $e$ and $(p(e^+), e^+)$.
\end{proof}

\begin{defn}[Acceptable return]\label{acceptable return} 
Let $t>0$ and let the bridge $(e,s)$ belong to $\U_t$. Write $e^c := V(G)\setminus\{e^+, e^-\}$. If $X$ returns to $e$ after time $t$, i.e., $H_{t, e^-}<\infty$, we say the return is acceptable if 
\begin{enumerate}
\setlength\itemsep{0.8pt}
\item $X$ makes a frontier departure from $e$ (i.e., $X$ leaves to a previously unencountered vertex), say at time $\tau$ (so that $\tau$ is a frontier time);
\item and $X$ goes $N$ steps forward consecutively in the time interval $(\tau, \tau + T)$ (note that by right-continuity of $X$ the frontier departure step cannot be counted towards $N$). 
\end{enumerate}
\end{defn}

\begin{remark}\label{r.cntd2}
Observe that, conditional on $X$ making a frontier departure, \Cref{ANT} says that item 2 above occurs with probability at least $p_{N,T,d}^{(1)}$. Thus we can combine Lemmas~\ref{frontier departure} and~\ref{ANT} to find that, conditional on $H_{t, e^-}<\infty$ for some bridge $(e,s)\in\U_t$,
$$\P_T(\text{return to } e \text{ is acceptable} \mid H_{t,e^-}<\infty) \geq \frac{d-1}{d+1}(1-e^{-(d-1)T/2}) \times p_{N,T,d}^{(1)} =: p_{N,T,d}^{(2)}.$$
\end{remark}

Now we may prove \Cref{p.fourpart} (4). The idea of the proof is to estimate the number of useful bridges at a sequence of random stopping times which we will construct. If we can show that with positive probability this number goes to infinity without ever hitting $0$, then the CyTRM will be seen to be transient---the particle cannot have returned to the root. 

We do this by using \Cref{ANT}, i.e., by considering the event of moving forward consecutively $N$ steps after a frontier departure; each time this occurs, the number of useful bridges at the time of completing the $N$\textsuperscript{th} step increases by at least $N-2$. Similarly we control the effect of bad returns by \Cref{bad return}. Using the bounds on the probabilities of these events, we can stochastically dominate $|\U_t|$ by a random walk on $\Z$ with related transition probabilities. The final step is to analyse for what values of $T,d,$ and $N$ this random walk can be guaranteed to have positive drift, for such a random walk will stay positive forever with positive probability. We now turn to the technical details.
%Then, we observe that the number of useful bridges at these times stochastically dominates a certain random walk on $\Z$; if this random walk can be shown to have positive drift, we will be done. This happens for the values of $T$ and $d$ claimed.

\begin{proof}[Proof of \Cref{p.fourpart} (4)]
We construct a sequence of stopping times $\tau_k$ where we estimate the number of useful bridges. The stopping times will be defined iteratively based on when the particle next returns to the child $e^-$ of a useful bridge $e$ (if it does) and where it jumps to from there. If the return is acceptable, i.e., on returning it moves forward into new territory and then goes $N$ steps forward consecutively, we will have the next stopping time be the moment that it completes the $N$\textsuperscript{th} step. If the return is not acceptable, we will have the next stopping time be when the particle reaches $p(e^+)$ (the worst case), if it does.

Throughout we will need the number of useful bridges to be at least two; this is only a technical requirement to ensure that we have at least one useful bridge not joined to $\phi$, as Lemmas \ref{frontier departure} and~\ref{bad return} assume the parent vertex is not the root. Hence if at any point $|\U_t|<2$, we choose to give up.

Define $\tau_1 = \inf\{t\geq 0: |\U_t|\geq 2\}$. Observe that $\tau_1 < \infty$ with positive probability; otherwise, set $\tau_j = -\infty$ for every $j>1$, as a technical convention to say that we have failed and are giving up. Similarly for $k\in \N^+$, if $|\U_{\tau_k}| \leq 1$, set $\tau_j = -\infty$ for all $j> k$.

Otherwise, denoting the last bridge crossed in $\U_{\tau_k}$ before time $\tau_k$ by $e_k$, define $\chi_k = H_{\tau_k, e_k^-}$. This time is when the particle next returns to $e_k^-$. Now there are a few cases ($\mathrm{dist}$ is the graph distance on $G$):
\begin{itemize}
\item If $\chi_k = \infty$, set $\tau_{j} = \infty$ for $j > k$, as a technical convention to indicate success; the particle's trajectory is transient.
\item If $\chi_k < \infty$ and the return of $X$ to $e_k$ is acceptable, set 
$$\tau_{k+1} = \inf\{t\geq H_{\chi_k, e_k^c} \mid \mathrm{dist}(Y(t), e_k^-) = N\},$$
i.e., the first time after $H_{\chi_k, e_k^c}$ (the frontier departure time) that $X$ reaches $N$ steps forward.
\item If $\chi_k < \infty$ and the return of $X$ to $e_k$ is not acceptable, set 
$$\tau_{k+1} = H_{\chi_k, p(e^+)},$$
which may be infinite, in which case the particle's trajectory is transient.
\end{itemize}

For $k\in\N$, define $u_k = |\U_{\tau_k}|$. Specify three random variables $p_k, q_k, r_k$, defined under $\P_T$ given $X_{[0,\tau_k]}$, as follows. Let $A_k$ be the event that the return to $e_k$ is acceptable, and set
 %Let $p_k$ equal the conditional probability that $\chi_k<\infty$. Given $\chi_k<\infty$, define $q_k$ to be the probability that the return of $X$ to $e_k$ is acceptable. Lastly, let $r_k$ be the conditional probability, given that $\chi_k<\infty$ and that the return of $X$ to $e_k$ is not acceptable, that $H_{\chi_k, p(e^+)}<\infty$.
%
\begin{align*}
p_k &= \P_T\left(\chi_k < \infty \mid X_{[0,\tau_k]}\right)\\
q_k &= \P_T\left(A_k \mid X_{[0,\tau_k]}, \chi_k<\infty\right)\\
r_k &=  \P_T\left(H_{\chi_k, p(e^+)}<\infty\mid X_{[0,\tau_k]}, \chi_k<\infty, A_k^c\right).
\end{align*}

Note that $u_k$ is measurable with respect to $X_{[0,\tau_k]}$. Note also that, by the definition of an acceptable return and Lemmas~\ref{ANT} and~\ref{bad return}, the conditional distribution of $u_{k+1} - u_k$ given $X_{[0, \tau_k]}$ and $\{u_k>1\}$ stochastically dominates the law
$$(1-p_k)\cdot\delta_\infty + p_kq_k\cdot\delta_{N-2} + p_k(1-q_k)(1-r_k)\cdot\delta_\infty + p_k(1-q_k)r_k\cdot\delta_{-2},$$
which is parametrized by $(p_k, q_k, r_k)$. Since it is not in our favour if $p_k=1$ or $r_k=1$, this law stochastically dominates the one where $p_k, r_k = 1$.

\Cref{r.cntd2} says that $\P_T(\cdot\mid X_{[0,\tau_k]}, u_k>1)$-a.s.,
$$q_k \geq p^{(2)}_{N,T,d}.$$

In summary, conditional on $X_{[0,\tau_k]}$ and $\{u_k>1\}$, the law of $u_{k+1}-u_k$ $\P_T$-a.s. stochastically dominates the law
$$p^{(2)}_{N,T,d}\cdot\delta_{N-2} + (1-p^{(2)}_{N,T,d})\cdot\delta_{-2}.$$

Now let $Q:\N^+\to\R$ denote the random walk with independent increments whose law is $p^{(2)}_{N,T,d}\cdot\delta_{N-2} + (1-p^{(2)}_{N,T,d})\cdot\delta_{-2}$ and initial condition $Q(1) = 2$. Let $\rho$ be the first time $Q$ goes strictly below 2, and define $Q_*:\N^+\to \R$ by
$$Q_*(i) = \begin{cases}
Q(i) & \text{if } i\leq \rho\\
0 & \text{if } i>\rho
\end{cases}$$
for each $i\in\N^+$.

We now have that conditionally on $\tau_1<\infty$, $\{u_i: i\in \N^+\}$ stochastically dominates $\{Q^*(i) : i\in\N^+\}$. Thus we need to find $N, T,d$ such that with positive probability $Q(i)$ tends to infinity while staying strictly above 1 always. This is satisfied if the drift is positive, which is to say that it suffices to have
\begin{align}
0&<(N-2)p^{(2)}_{N,T,d} -2(1-p^{(2)}_{N,T,d})\nonumber\\
&= Np^{(2)}_{N,T,d} - 2\nonumber\\
&= N\frac{d-1}{d+1}(1-e^{-(d-1)T/2})\left(1-\frac1d\right)^N\left[1- e^{N-(d+1)T}\left(\frac{(d+1)T}{N}\right)^N\right]-2. \label{e.drift}
\end{align}
Note that~(\ref{e.drift}) is increasing in $T$ for $T>N(d+1)^{-1}$; hence if some choice of $(N,T,d)$ works in this range, so will higher values of $T$. Now taking $N=4, T=0.495$  and $d\geq 16$ gives a positive drift by direct calculation. Therefore $u_i$ remains above 1 and goes to infinity with positive probability for $N=4, T\geq 0.495$ and $d\geq 16$, which can occur only if the particle's trajectory is transient, thus proving the theorem.
\end{proof}

\begin{remark}\label{r.N-choice}
Note that it was not necessary to take $N=4$ in the last calculation; this was only done so as to get transience for $T\geq 0.495$, as the interval $(T_c, 0.5]$ is covered by the other parts of \Cref{p.fourpart}.  In fact, we can fix $\varepsilon>0$, take $T=(1+\varepsilon)N(d+1)^{-1}$, and then adjust $N$ and $d$ in order to make \eqref{e.drift} positive. It is easy to check that this is possible by taking $N$ and $d$ sufficiently large. In other words, given an $\varepsilon>0$, there exist high $N$ and $d$ such that $\cytrm{u,T}$ is transient for $T > (1+\varepsilon)N(d+1)^{-1}$.
\end{remark}

We end this section by indicating which of these lemmas and propositions have been taken or modified from \cite{hammond1}. Our \Cref{l.conddist} is Lemma 2.1 of \cite{hammond1}, our \Cref{frontier departure} is Lemma 2.6 of \cite{hammond1}, and our \Cref{bad return} is Lemma 2.14 of \cite{hammond1}.

\section{Modifications of Previous Proofs}\label{modifications}

In this section we show how to modify existing proofs, namely from \cite{angel} and \cite{hammond2}, to complete the proofs of part (2) and (3) of \Cref{p.fourpart}.
Angel's proof in \cite{angel} is applicable to $u=1$ only, but a minor modification using \Cref{no direction conflict} above allows us to extend it.
The argument of Hammond in \cite{hammond2} is applicable even for $u\neq 1$, as elaborated in the appendix of \cite{ueltschi},
but the bounds can be significantly tightened by a more careful calculation. This is needed for the bounds we claim.

We do not provide self-contained proofs because the involved changes are too mundane to warrant doing so. However, we have provided an overview with indications on how to modify the original proofs to apply when $u\neq 1$. The reader may wish to refer to the original papers to get a complete understanding of the argument.

\subsection{Modification of Angel's Proof}

\begin{theorem} \label{angel theorem}
Let $G$ be a regular tree with $d$ offspring at every vertex (so $(d+1)$-regular). Then for any $\varepsilon>0$, there exists $d_0$ such that for $d\geq d_0$ and any $u\in[0,1]$, $\cytrm{u,T}$ on $G$ will be transient for $T\in [d^{-1}+(7/6+\varepsilon)d^{-2}, 1/2].$

In particular, we have that $\cytrm{u,T}$ is transient for $T\in[d^{-1} + 2d^{-2}, 4d^{-1}]$ for $d\geq 56$ and for $T\in[4d^{-1}, 1/2]$ for $d\geq 9$.
\end{theorem}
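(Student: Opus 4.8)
The proof follows Angel's argument in \cite{angel} essentially verbatim; for $u=1$ it \emph{is} Angel's theorem (with the range and $d$-bounds sharpened by a more careful computation), so the only genuine work is a local adaptation to accommodate the direction-reversing double bars. I would first recall the skeleton. Angel introduces, for a vertex $v$, a local event $G_v$ — a prescribed pattern of the bridges in a bounded window of $v$'s pole around the height at which the meander would first arrive at $v$ — with two properties: (i) conditional on the meander reaching $v$ for the first time (at a fresh location) and on $X_{[0,t]}$ up to that moment, $\P_T(G_v \mid \cdot)$ is at least an explicit quantity $p=p(d,T)$, by \Cref{l.conddist}; and (ii) if $G_v$ holds then the meander, started from its fresh arrival at $v$, either never returns to that arrival point — so that, by the tree's geometry, its path is transient — or is forwarded to the fresh pole of an offspring $w$ of $v$ at a height for which the analogous event $G_w$ again has conditional probability at least $p$. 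Iterating, the set of vertices reached fresh by the meander at which the good event holds stochastically dominates a Galton–Watson tree with offspring mean at least $d\,p(d,T)$ (see \cite{angel} for the precise branching mechanics, which also track the probing of several offspring of a single good vertex); when this mean exceeds $1$ the tree is supercritical, hence infinite with positive probability, and on that event the meander either escapes at some good vertex or follows an infinite ray of good vertices — in both cases it never returns to $\phi$. Thus $\cytrm{u,T}$ is transient whenever $d\,p(d,T)>1$.

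The second step is the modification for $u\in[0,1)$. Angel's verification of property (ii) is a finite case analysis of where the bridges in the window lead, and it uses repeatedly that, on a pole the meander is traversing for the first time, the meander moves monotonically (in cyclic time, upward) through genuinely unexplored heights, so that the conditional law of the bridges it is about to meet is an undisturbed Poisson process. With double bars the direction of vertical motion can flip on a jump, so one must rule out the meander re-entering, from the opposite side, a stretch of pole it has already crossed; this is exactly \Cref{no direction conflict}, which says a particle passing upward through $(v,t)$ can only ever return to $(v,t)$ moving upward. Granting this, every branch of Angel's case analysis goes through with ``bridge'' reinterpreted throughout as ``cross or double bar'': the forwarding in property (ii) uses only that a bridge from a pole leads to a neighbouring pole, not which type it is, and the freshness and independence inputs are preserved by \Cref{l.conddist}. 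Consequently $p(d,T)$ may be taken to be precisely the bound that Angel establishes in the $u=1$ case, uniformly in $u$.

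It remains to extract the ranges. Angel's estimate yields $d\,p(d,T)>1$ for $T$ in an interval of the form $(d^{-1}+(7/6+\varepsilon)d^{-2},\,1/2\hair]$ once $d\ge d_0(\varepsilon)$, which is the first assertion. For the sharper ``in particular'' claims one redoes the probability computation keeping explicit constants: $p(d,T)$ is a concrete expression in $d$ and $Td$, built from Poisson gap probabilities together with the factor $(d+1)^{-1}$ that a bridge's far endpoint is a prescribed vertex. Splitting the target range into $[d^{-1}+2d^{-2},4d^{-1}]$ and $[4d^{-1},1/2]$ and using monotonicity in $T$ on each piece reduces $d\,p(d,T)>1$ to checking the interval endpoints, hence to a single inequality in $d$ on each piece, which is verified for $d\ge 56$ on the first and $d\ge 9$ on the second by direct calculation (the finitely many borderline values of $d$ being handled individually). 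The main obstacle is not any of these estimates but the bookkeeping of the previous paragraph: re-examining each branch of Angel's case analysis to confirm that permitting a cross to be a double bar — hence permitting the local direction of motion to reverse — never invalidates the step, with \Cref{no direction conflict} doing the work at each such point.
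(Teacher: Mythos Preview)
Your proposal is correct and follows essentially the same route as the paper: Angel's Galton--Watson argument goes through once one checks, via \Cref{no direction conflict}, that the direction-reversing double bars do not spoil the ``forwarding'' step, and the probability bound $p(d,T)$ is then the same as Angel's, uniformly in $u$. Two minor points of comparison: the paper is more surgical in that it isolates a \emph{single} claim in Angel's proof as the one place needing modification for $u\neq 1$---namely that if $v$ is uncovered and the meander reaches its parent $u$, then it either reaches $v$ or leaves $u$ never to return (stated and proved separately as \Cref{l.angel})---rather than re-examining a full case analysis; and for the explicit $d$-thresholds the paper simply cites \cite[Lemma~B.2]{hammond1} rather than redoing the endpoint computations you describe.
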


This is Theorem 3 of \cite{angel}, but for $u\neq 1$. The proof is essentially \cite{angel}'s except for a minor modification. In particular, the proof of a claim in the initial section of the proof of \cite[Theorem 3]{angel} must be modified for $u\neq 1$, which we isolate below as \Cref{l.angel}. 

First we recall some required definitions from \cite{angel}:

\begin{defn}[Good vertex]
Let $v$ be a vertex and $u$ be its parent. We say $v$ is \emph{good} if there is only a single bridge between $u$ and $v$.
\end{defn}

\begin{defn}[Uncovered vertex]
Let $v$ be a good vertex, $u$ be its parent and a good vertex, and $v'$ be a sibling of $v$. 
Call $v$ \emph{covered by $v'$} if the bridges from $u$ to $v'$ cyclically separate (on the pole of $u$)
the unique bridge from $u$ to $v$ and the unique bridge from $u$ to its parent.

We say a vertex $v$ as above is \emph{uncovered} if it is not covered by any of its siblings.
\end{defn}

\begin{lemma}\label{l.angel}
Suppose $v$ is uncovered and the particle reaches $u$, the parent of $v$. 
Then either the particle reaches $v$, or it leaves $u$ at some point and never returns.
\end{lemma}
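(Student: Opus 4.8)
### Proof Strategy for Lemma \ref{l.angel}

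The plan is to analyze what happens to the particle once it first reaches $u$, keeping careful track of the position (above or below, in the cyclic sense) of the unique bridge from $u$ to $v$, which I will call $b_v$, relative to the unique bridge from $u$ to $p(u)$, which I will call $b_p$. Since $v$ is good, $b_v$ is the \emph{only} bridge between $u$ and $v$, and since $u$ is good, $b_p$ is the only bridge between $u$ and $p(u)$. The key structural input is that $v$ being uncovered means that for every sibling $v'$ of $v$, the bridges between $u$ and $v'$ do \emph{not} cyclically separate $b_v$ from $b_p$ on the pole of $u$; equivalently, starting from $b_p$ and moving in either cyclic direction along the pole of $u$, one encounters $b_v$ before encountering any bridge leading to a sibling, or more precisely the arc of the pole of $u$ between $b_p$ and $b_v$ (in at least one direction) is free of sibling-bridges. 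I would make this ``one clear arc'' statement precise first, as it is the crux.

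Next I would follow the particle from the moment $H_u$ it first arrives at $u$. It arrives by crossing some bridge on the pole of $u$; the only bridges on that pole lead to $p(u)$, to $v$, or to siblings of $v$ (there are no others, by the tree structure and the fact that we only care about bridges incident to $u$). If it arrives via $b_v$, it came from $v$, contradicting nothing but meaning we should instead track from the first visit; in any case, consider the cyclic vertical motion of the particle along the pole of $u$ between consecutive bridge-crossings. Each time the particle is on the pole of $u$ it moves (cyclically, at unit speed, in a direction fixed by Proposition~\ref{no direction conflict} so that it cannot reverse on an already-traversed sub-arc) until it hits the next bridge and jumps. I would argue that, because of the uncovered property, the particle cannot ``skip over'' $b_v$ indefinitely: on the clear arc between $b_p$ and $b_v$ there is no sibling bridge, so any cyclic traversal that would carry the particle from near $b_p$ toward $b_v$ along that arc is forced to hit $b_v$ and cross to $v$. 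The only way to avoid ever reaching $v$ is to avoid ever traversing that clear arc in the appropriate direction — which forces the particle, after finitely many jumps among sibling-poles and back, either to leave $u$ permanently via a sibling bridge or via $b_p$ and not return, or to eventually be funneled across $b_v$.

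To make the dichotomy clean I would set it up as follows: suppose the particle returns to $u$ infinitely often (if not, it leaves $u$ at some point and never returns, which is the second alternative, unless it reached $v$ in the meantime). Each return to $u$ happens at a specific height, and the set of heights on the pole of $u$ ever visited is a union of arcs that only grows; by Proposition~\ref{no direction conflict} the direction of motion at each already-visited height is fixed. Using the clear-arc property I would show that the visited region of the pole of $u$, were the particle never to cross $b_v$, would have to stay within the complement of the clear arc — but a particle doing cyclic unit-speed motion on a pole, every time it is there, sweeps out an arc, and the only bridges bounding the unvisited complement are $b_v$ itself and sibling bridges; tracing the forced itinerary shows the complement shrinks until the particle is pushed across $b_v$. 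Hence if the particle returns to $u$ infinitely often it must reach $v$; and if it returns only finitely often, then after the last visit it leaves and never comes back. Either way the lemma holds.

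The main obstacle I anticipate is the bookkeeping in the previous paragraph: precisely formalizing ``the clear arc forces a crossing of $b_v$'' requires carefully describing how the cyclic motion on the pole of $u$ interacts with the finitely many sibling-bridges and with $b_p$, and invoking Proposition~\ref{no direction conflict} to rule out the particle undoing its own progress. I expect the cleanest route is a minimality/parity argument paralleling the proof of Proposition~\ref{no direction conflict}: assume for contradiction the particle reaches $u$, never reaches $v$, and returns to $u$ infinitely often, then extract a finite sub-trajectory between two visits to the same $(u,\text{height})$ and derive a contradiction with the geometry of the clear arc. Everything else — the reduction to ``infinitely many returns vs.\ finitely many'', and the structural description of bridges incident to $u$ — is routine.
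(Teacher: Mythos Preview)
Your proposal rests on a misreading of ``uncovered''. You claim it is equivalent to there being a clear arc between $b_p$ and $b_v$ containing no sibling bridges at all. That is false: the definition only requires that, for \emph{each individual sibling} $v'$, the set of $(u,v')$-bridges does not cyclically separate $b_p$ from $b_v$. Concretely this means that all of $v'$'s bridges lie on the \emph{same} arc between $b_p$ and $b_v$; different siblings are free to choose different arcs. So both arcs can contain sibling bridges, and the ``one clear arc'' statement you intend to prove first simply does not hold. Since the rest of your argument (the visited region being trapped outside the clear arc, the complement shrinking, the particle being funnelled across $b_v$) is built on this nonexistent clear arc, the plan does not go through.

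The correct structural consequence of ``uncovered'' is per-edge rather than global, and it makes the proof much shorter than what you sketch. Write $t_u$ for the height of $b_p$ and $t_v$ for the height of $b_v$, and let $I$ be the arc from $t_u$ to $t_v$ in the direction the particle first moves on arriving at $u$ (say upward). Uncovered says exactly: if some $(u,v')$-bridge lies in $I$, then \emph{every} $(u,v')$-bridge lies in $I$. Hence any sibling the particle can reach from inside $I$ can only send it back to a height in $I$; inductively, if the particle never crosses $b_v$, its visits to the pole of $u$ are confined to $I$ for all time. Now apply Proposition~\ref{no direction conflict} once: for the trajectory through $(u,t_u)$ to be a finite cycle the particle must eventually recross $b_p$ toward $p(u)$, which from inside $I$ means arriving at $(u,t_u)$ moving \emph{downward}; but it started there moving upward, a contradiction. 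So $(u,t_u)$ is not on a finite cycle, and the second alternative of the lemma holds. No infinite-return dichotomy or shrinking-complement bookkeeping is needed.
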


\begin{proof}
Suppose that the first case does not occur, i.e., the particle does not reach $v$. We must show that $(u, t_u)$ is not part of a finite cycle. Assume without loss of generality that the particle is moving up from $(u, t_u)$. 

Since $v$ is uncovered, we have that if a bridge supported on an edge is present in $I := [t_u, t_v]$, there is no bridge supported by the same edge outside $I$.
The fact that we never reach $v$ implies that the particle can in future be present at vertex $u$ only in the interval $[t_u, t_v]$.
Due to the tree geometry, this tells us that if $(u, t_u)$ is part of a finite cycle, the bridge at $t_u$ must be encountered via downward motion at some point.
But since we initially moved up from $(u, t_u)$, \Cref{no direction conflict} tells us that this is not possible.
\end{proof}

\begin{proof}[Proof sketch of \Cref{angel theorem}]
Given Lemma~\ref{l.angel}, two things are needed to complete the proof. It must be shown that the number of good, uncovered offspring of distinct vertices are independent random variables, so that the good, uncovered connected component containing $\phi$ is a Galton-Watson tree, and then it must be shown that for the claimed choices of $d$ and $T$, this random variable has mean greater than 1---as this is equivalent to the Galton-Watson tree containing with positive probability an infinite path starting at the root.

This is accomplished by the rest of Angel's proof, which goes through even for $u\neq 1$, and so we obtain the same result as there.
Also refer to \cite[Lemma B.2]{hammond1} for the proof of the particular bounds on $d$ and $T$-intervals.
\end{proof}

\begin{proof}[Proof of \Cref{p.fourpart} (2)]
This is the same statement as the second part of \Cref{angel theorem}.
\end{proof}

\subsection{Modification of Hammond's Proof of Monotonicity}
As substantiated in the appendix of~\cite{ueltschi}, Hammond's proof of monotonicity given in \cite{hammond2} works essentially without modification for any $u\in[0,1]$. However, the bound on $d$ obtained in that paper for which the result applies can be easily tightened to get the bounds we claim by a closer examination of the proof and some new calculations. In this subsection we discuss how to go about doing this.

We do not provide a self-contained proof of the monotonicity claimed in \Cref{p.fourpart} (3). Instead, we give a sketch, taking certain claims and inputs from \cite{hammond2} as black boxes. A reader who has not read \cite{hammond2} should be able to read this subsection and obtain a clear idea of the overall proof, modulo certain facts which are stated but not proved here. To obtain a complete proof, it is recommended to the reader to refer to \cite{hammond2} alongside this subsection to get a detailed understanding of where and how the original proof and calculations are modified.

First we recall some notation from \cite{hammond2}. 
The idea of that paper is to consider the probability that $\cytrm{T}$ does not return to the pole over the root $\phi$, denoted $p_\infty(T)$,
and to show that it is non-decreasing in the interval $(d^{-1}, d^{-1} + 2d^{-2}]$ which contains the critical point.
This is done by considering ``local approximations'' $p_n(T)$, defined as the probability that $\cytrm{T}$ ever reaches level $n$ of the tree,
and proving that these functions are differentiable and non-decreasing in the required interval. This is clearly sufficient as $p_n \downarrow p_\infty$ pointwise. Unlike in \cite{hammond2}, here the quantities $p_n$ and $p_\infty$ are defined with respect to $\cytrm{u, T}$ instead of $\cytrm{T}$. 

%\begin{theorem}[Proposition 1.3 of \cite{hammond2}, Proposition 3.1 of \cite{ueltschi}] \label{monotonicity}
%Suppose that $d \geq 26$. Let $d^{-1} < T < T' \leq d^{-1} + 2d^{-2}$. If $T$ is such that $\cytrm{u,T}$ is transient, then the same is true for $T'$.
%\end{theorem}

\begin{proof}[Proof of \Cref{p.fourpart} (3)]
By the above discussion, this is implied by the next proposition.
\end{proof}

\begin{prop}[Modification of Proposition 1.8 of \cite{hammond2}]\label{pn-derivative}
Let $d\geq 26$ and suppose $d^{-1}< T \leq d^{-1}+2d^{-2}$. Then for each $n\geq 1$, $p_n$ is differentiable at $T$ and $\frac{\mathrm dp_n}{\mathrm dT}(T) > 0$.
\end{prop}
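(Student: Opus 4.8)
The strategy is to adapt the Russo-type argument from \cite{hammond2} to the $u\neq 1$ setting while sharpening the quantitative bounds. The plan is to express $p_n(T)$ as the probability that $\cytrm{u,T}$ ever reaches level $n$, and to study the effect of adding, uniformly at random, one extra bridge to the poles at distance at most $n$ from the root. Concretely, I would set up the differentiation by conditioning on the Poisson configuration restricted to the relevant poles, writing $p_n(T+\mathrm dT) - p_n(T)$ in terms of the probability that exactly one new bridge falls in the infinitesimal window and changes the event $\{$reach level $n\}$ from failure to success (or conversely). The derivative $\frac{\mathrm dp_n}{\mathrm dT}$ is then a sum over possible locations of this extra bridge of the ``pivotality'' probability that inserting a bridge there creates a path to level $n$ that did not exist, minus the probability that it destroys one. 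The heart of the argument in \cite{hammond2} is that the positive (creating) contributions strictly dominate the negative (destroying) ones; this is where the bound on $d$ enters, since the relevant comparison is between geometric series in $d^{-1}$.

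Second, I would verify that the two structural facts needed for differentiability carry over to general $u$: namely that $p_n$ is a polynomial-like (piecewise analytic, in fact differentiable) function of $T$ on the interval in question, and that the pivotality decomposition is valid. For the first, one notes that on $(d^{-1}, d^{-1}+2d^{-2}]$ the expected number of bridges on the poles up to level $n$ is $O(nd\cdot d^{-1}) = O(n)$, and conditioning on there being finitely many bridges makes $p_n$ a finite sum of terms each smooth in $T$; the direction-switching double bars do not affect this, since Proposition~\ref{no direction conflict} guarantees the particle's motion is still consistently oriented on each traversed pole segment, so the combinatorics of which configurations reach level $n$ is unchanged in structure (only the transition rules at a bridge differ, and these are deterministic given the bridge type). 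For the second, the key observation from \cite{hammond2} is that recrossing phenomena can be controlled: a single extra bridge can only help by opening a new escape route near its parent pole, and can only hurt in a bounded way, exactly as in Lemmas~\ref{frontier departure} and~\ref{bad return} above. The appendix of \cite{ueltschi} already records that this structure survives for $u\in[0,1]$, so the main new work is quantitative.

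Third — and this is the main obstacle — I would redo the calculation that the net derivative is strictly positive, obtaining the improved threshold $d\geq 26$. In \cite{hammond2} the bound on $d$ was not optimized; the sharpening comes from a more careful accounting of (a) the probability that the particle, upon reaching a pole, makes progress toward level $n$ versus retreating, and (b) the precise constants in the geometric comparison between the ``creating'' and ``destroying'' contributions on the interval $(d^{-1}, d^{-1}+2d^{-2}]$. The plan is to track the leading and first subleading terms in $d^{-1}$ explicitly — using that $T\in(d^{-1}, d^{-1}+2d^{-2}]$ so that $dT \in (1, 1+2d^{-1}]$ — and to bound the error terms by explicit (rather than asymptotic) geometric tails, so that the inequality can be checked numerically at $d=26$. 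I expect the bookkeeping here to be the most delicate part: one must be careful that the ``pivotal bridge'' analysis is uniform in $n$ (so that the bound on $\frac{\mathrm dp_n}{\mathrm dT}$ does not degrade as $n\to\infty$), which requires the recursive/self-similar structure of the tree to absorb the level-$n$ dependence into a fixed-point-type estimate. Once the strict positivity of the derivative of each $p_n$ is established on the whole interval, the proposition follows, and then (as noted in the excerpt) $p_\infty$ is non-decreasing there, yielding Proposition~\ref{p.fourpart}(3).
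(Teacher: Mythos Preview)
Your high-level strategy is correct and matches the paper's: differentiate $p_n$ via a Russo-type formula expressing $\frac{\mathrm dp_n}{\mathrm dT}$ as $|E(\mathcal T_n)|\big(\P_T(P_n^+) - \P_T(P_n^-)\big)$ for suitable pivotality events, then sharpen the quantitative comparison to push the threshold down to $d\geq 26$. However, two elements of your plan are misdirected and would not lead to a proof as written.

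First, your invocation of Lemmas~\ref{frontier departure} and~\ref{bad return} is misplaced. Those lemmas concern useful bridges and frontier departures in the high-$T$ transience argument of Section~\ref{main_proof}; they play no role in the monotonicity proof. The decomposition from \cite{hammond2} that is actually needed splits the pivotality events according to (i)~whether the particle reaches the added bridge $\mathcal A_n$ before exiting $\mathcal T_n$ (the crossing event $C$), (ii)~whether some edge on the path from the root to the parent vertex of $\mathcal A_n$ supports a single bridge (the bottleneck event $B$), and (iii)~whether the particle, after recrossing that bottleneck, returns to the root before exiting $\mathcal T_n$ (the no-escape event $N$). One then writes $\P_T(P_n^+)-\P_T(P_n^-)=A_1+A_2$, with $A_1$ collecting the contribution on $C\cap B^c$ and $A_2$ that on $C\cap B\cap N$, and proves $A_2\geq 0$ and $A_1>0$ separately.

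Second, the specific sharpening that yields $d\geq 26$ is not a matter of tracking leading and subleading orders in $d^{-1}$, but of replacing a crude bound on a series in the proof of \cite[Lemma~4.5]{hammond2}. There, a sum $\sum_{k\geq 1}(k+2)^2 a^k$ with $a=\tau^2 e^{2+\tau/d}/\big(d(e-1)\big)$ was previously estimated via $(k+2)^2\leq 2^{k+1}$; evaluating it exactly as $a(4a^2-11a+9)/(1-a)^3$ is the improvement. Finally, the uniformity in $n$ requires no fixed-point or recursive argument: the lower bound on $A_1$ comes out as $p_{n-1}/|E(\mathcal T_n)|$ times an explicit function of $(d,\tau)$, so after multiplying by $|E(\mathcal T_n)|$ the derivative is at least $p_{n-1}$ times that function, and one simply checks numerically that the function is positive at $\tau=1+2/d$ for $d=26$.
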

\begin{remark}
Proposition 1.8 of \cite{hammond2} is used to prove that paper's Proposition 1.3 (our \Cref{p.fourpart} (3)), and is stated as $\frac{\mathrm dp_n}{\mathrm dT}(T) > \frac{d}{2}e^{-Td}p_n$;
however, as indicated above, the proof of Proposition 1.3 itself only requires  $\frac{\mathrm dp_n}{\mathrm dT}(T) > 0$, 
which is partly what allows us to get a better range for the $d$ where \Cref{p.fourpart} (3) is applicable.
\end{remark}

\begin{notation}
We let $\mathcal T_n$ be the subgraph induced by the vertices within distance $n$ of the root. 
\end{notation}

Using a formula analogous to Russo's formula from percolation theory, we can write the derivative of $p_n$ in terms of certain ``pivotal'' events $P_n^+$ and $P_n^-$. These events are defined in terms of the effect of a ``uniformly added bridge''. To be precise, to the existing random arrangement of bridges, we add one additional bridge $\mathcal A_n$ sampled from normalized Lebesgue measure on $E(\mathcal T_n) \times [0, T)$, independently of the existing bridges, which is direction maintaining with probability $u$ and direction switching with probability $1-u$. 

This new bridge $\mathcal A_n$ can potentially affect the trajectory of the particle. One of three things can happen: the trajectory of the particle originally did not exit $\mathcal T_n$, and now does; the particle did exit $\mathcal T_n$ originally, but no longer does; or finally, the event of the particle exiting $\mathcal T_n$ is unaffected. We denote by $P_n^+$ and $P_n^-$ the events that the first and the second possibilities occur.

We can now state (without proof) an expression for the derivative of $p_n$ in terms of these events.

\begin{lemma}[Lemma 1.7 of \cite{hammond2}] \label{pn-deriv-expression}
For each $n\in \N$, $p_n:(0,\infty) \to [0,1]$ is differentiable; for $T>0$,
$$\frac{\mathrm dp_n}{\mathrm dT}(T) = |E(\mathcal T_n)|\left(\P_T(P_n^+) - \P_T(P_n^{-})\right).$$ 
\end{lemma}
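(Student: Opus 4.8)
The plan is to prove \Cref{pn-deriv-expression}, the Russo-type formula $\frac{\mathrm dp_n}{\mathrm dT}(T) = |E(\mathcal T_n)|\bigl(\P_T(P_n^+) - \P_T(P_n^{-})\bigr)$, by the standard coupling argument for such derivative formulas. First I would set up the right probability space: the bridges on $E(\mathcal T_n) \times [0,T)$ form a rate-one Poisson process, so the number of bridges is Poisson with mean $|E(\mathcal T_n)| T$, and conditionally on this number, the bridge positions are i.i.d.\ uniform on $E(\mathcal T_n) \times [0,T)$ (each independently a cross with probability $u$, a double bar otherwise). Since $p_n(T)$ is the probability that the particle reaches level $n$, and the event $\{X \text{ reaches level } n\}$ depends only on the finitely many bridges inside $\mathcal T_n$ up to time $T$ (the particle cannot reach level $n$ without staying within $\mathcal T_n$), this reduces the problem to a finite-dimensional computation.

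The core step is a difference quotient: compare the configuration at pole height $T$ with that at height $T + \varepsilon$. By the Poisson superposition/thinning property, the configuration at $T+\varepsilon$ can be coupled to that at $T$ by adding an independent Poisson$(|E(\mathcal T_n)|\varepsilon)$ number of extra bridges, each placed uniformly at random on $E(\mathcal T_n)\times[0,T+\varepsilon)$ — or, to leading order in $\varepsilon$, a single extra bridge placed uniformly on $E(\mathcal T_n)\times[0,T)$ with probability $|E(\mathcal T_n)|\varepsilon + o(\varepsilon)$, and zero or $\geq 2$ extra bridges with probability $o(\varepsilon)$. (One also has to account for the pole heights growing from $T$ to $T+\varepsilon$, but the probability that any existing bridge or the particle's trajectory interacts with the new sliver $[T, T+\varepsilon)$ is $O(\varepsilon)$, and conditionally this contributes only to the $P_n^\pm$ terms in a way that matches; care is needed here but it is routine.) Writing $p_n(T+\varepsilon) - p_n(T)$ and using that adding the uniform bridge $\mathcal A_n$ changes the event $\{$reach level $n\}$ exactly on $P_n^+$ (gain) and $P_n^-$ (loss), one gets $p_n(T+\varepsilon) - p_n(T) = |E(\mathcal T_n)|\varepsilon\bigl(\P_T(P_n^+) - \P_T(P_n^-)\bigr) + o(\varepsilon)$. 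Dividing by $\varepsilon$ and letting $\varepsilon \downarrow 0$ gives the right derivative; a symmetric argument with $T - \varepsilon$ (removing a uniform bridge, or equivalently noting the two-sided version) gives the left derivative, and the two agree, establishing differentiability and the formula simultaneously.

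I expect the main obstacle to be the bookkeeping around the change in pole height — in this cyclic-time model the poles have height exactly $T$, so increasing $T$ both adds bridge intensity and lengthens the poles, and one must verify that the lengthening contributes negligibly (or is correctly absorbed) rather than simply citing a percolation-style Russo formula where only the intensity changes. Concretely, one argues that the event that the particle, run until it either reaches level $n$ or returns to the root, ever visits a height in $[T,T+\varepsilon)$ on any pole, or that any relevant bridge lies in that sliver, has probability $O(\varepsilon)$ uniformly, so it does not affect the first-order term. The remaining ingredients — finiteness of the relevant randomness, the explicit identification of the sign of the change via $P_n^\pm$, and the Poisson-process perturbation estimate — are standard. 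Since this lemma is quoted verbatim from \cite[Lemma 1.7]{hammond2} and, as the surrounding text notes, Hammond's proof goes through essentially unchanged for $u \in [0,1]$ (the only role of $u$ is in declaring the added bridge a cross or double bar, which does not affect the combinatorial argument), I would in fact present this as a short remark that the proof of \cite{hammond2} applies mutatis mutandis, rather than reproducing it in full.
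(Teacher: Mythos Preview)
The paper does not prove this lemma; it simply states it without proof as a citation of \cite[Lemma~1.7]{hammond2}, which is precisely the course of action you settle on at the end of your proposal.

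One small technical remark on your sketch, since you flagged the pole-height issue as the main obstacle: the clean way to dispose of it is to first rescale the height coordinate so that poles always have length~$1$ and the bridge process has intensity~$T$. Then passing from $T$ to $T+\varepsilon$ is purely an increase in Poisson intensity, the extra bridge is genuinely uniform on $E(\mathcal T_n)\times[0,1)$, and the Russo-type difference-quotient computation is standard. Without this rescaling, the extra bridge produced by the superposition coupling lies in $E(\mathcal T_n)\times[T,T+\varepsilon)$, not uniformly on $[0,T+\varepsilon)$ as you wrote; the two pictures can be reconciled, but the rescaled version avoids the bookkeeping you were worried about altogether.
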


The probability $\P_T(P_n^+) - \P_T(P_n^{-})$ is decomposed as $A_1 + A_2$, where
$$A_1 = \P_T(P_n^+ \cap C \cap B^c) - \P_T(P_n^- \cap C \cap B^c),$$
$$A_2 = \P_T(P_n^+ \cap C \cap B\cap N) - \P_T(P_n^- \cap C \cap B\cap N).$$

Here $C$ is the \emph{crossing} event that the particle reaches $\mathcal A_n$ before exiting $\mathcal T_n$. 
If $C$ occurs, $B$ is the \emph{bottleneck} event that some edge between the root and the parent vertex supporting $\mathcal A_n$ supports a single bridge. Let $b_n$ be such a bridge that is farthest from the root. Suppose both $C$ and $B$ occur, and that the particle's trajectory is periodic. Then it must cross back along $b_n$ after crossing it the first time. The \emph{no escape} event $N$ is the event that the particle, considered from the time it recrosses $b_n$, reaches the root before exiting $\mathcal T_n$. 

These details are provided to give the reader some idea of the original proof and to be consistent with the notation in \cite{hammond2}; we will not actually be needing these details for our modifications.

At this point our next step is to give a lower bound on $A_1.$ This is the content of the next two lemmas. The first is stated without proof, but the second is one where we will need to make a more careful calculation than in \cite{hammond2}.

\begin{notation}
For convenience, we write $\tau = Td$, so that we are interested in $\tau \in[1, 1+2/d]$.
\end{notation}

\begin{lemma}[Lemma 4.3 of \cite{hammond2}]\label{l.A1first}
Suppose that $n\geq 1$, $d\geq 2$, and $T>0$. Then
$$\P_T(P_n^+\cap C\cap B^c)\geq de^{-\tau}\frac{p_{n-1}}{|E(\mathcal T_n)|}.$$
\end{lemma}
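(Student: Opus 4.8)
The plan is to establish the lower bound
\[
\P_T(P_n^+\cap C\cap B^c)\;\ge\; de^{-\tau}\,\frac{p_{n-1}}{|E(\mathcal T_n)|}
\]
by exhibiting an explicit sufficient configuration for the event $P_n^+\cap C\cap B^c$, and computing the probability of that configuration. First I would recall what these events say: $P_n^+$ is the event that adding the uniformly-placed bridge $\mathcal A_n$ makes the particle newly exit $\mathcal T_n$; $C$ is the event that the particle reaches $\mathcal A_n$ before exiting $\mathcal T_n$; and $B^c$ is the event that \emph{every} edge between the root and the parent vertex $(\mathcal A_n)^+$ supports at least two bridges (so there is no single-bridge ``bottleneck'' on that path). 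The key geometric idea is that if $\mathcal A_n$ sits on an edge joining level $n-1$ to level $n$, with parent vertex $v$ at level $n-1$, and if the original (unperturbed) trajectory reaches $v$ before exiting $\mathcal T_n$ and then returns to the root without exiting, then adding $\mathcal A_n$ on the edge from $v$ to a fresh offspring at level $n$ causes the particle, upon reaching $\mathcal A_n$, to cross into level $n$ and hence exit $\mathcal T_n$ — giving $P_n^+$. We also need $C$ (the particle reaches $\mathcal A_n$ before exiting) and $B^c$.

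The concrete steps I would carry out: (i) Condition on the position of $\mathcal A_n$; since it is uniform on $E(\mathcal T_n)\times[0,T)$, placing it on a prescribed edge at a prescribed height-interval contributes a factor $1/|E(\mathcal T_n)|$ (after the Lebesgue normalization), and we sum/integrate over admissible placements. (ii) Choose $v$ to be the vertex at level $n-1$ reached by the particle — using that with probability $p_{n-1}$ the unperturbed particle reaches level $n-1$, i.e.\ reaches some such $v$; by the tree structure and the definition of the local approximations, once the particle is at $v$ one of its $d$ offspring edges at level $n$ is ``fresh''/unexplored. (iii) On the fresh offspring edge, ask for a single bridge of $\mathcal A_n$ placed so that the particle, travelling on the pole of $v$, meets $\mathcal A_n$; the relevant probabilistic cost is that there is \emph{no} competing bridge of the original configuration in the unexplored stretch of the pole of $v$ before $\mathcal A_n$, which over a stretch of length at most $T$ costs at least $e^{-T\cdot(\text{rate})}$; with the rate normalization used here this yields the factor $e^{-\tau}$ (recall $\tau=Td$). (iv) The factor $d$ arises because there are $d$ offspring edges at level $n$ emanating from $v$, any of which can play the role of the fresh edge carrying $\mathcal A_n$. (v) Finally, absorb $B^c$: one must check that the sufficient configuration just described is compatible with $B^c$, or more precisely that restricting to $B^c$ only discards a part of the probability we are lower-bounding, so the inequality is preserved; since we are after a lower bound, it suffices to intersect with $B^c$ and note we have not claimed the bound is tight.

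The main obstacle I expect is bookkeeping the normalization constants correctly — reconciling the ``uniformly added bridge'' being drawn from \emph{normalized} Lebesgue measure on $E(\mathcal T_n)\times[0,T)$ (hence the $1/|E(\mathcal T_n)|$ and the disappearance of an explicit $T$) with the Poisson rate-one clocks governing the original bridges (hence the $e^{-\tau}$ coming from a no-bridge gap of length comparable to $T$ at rate comparable to $d$, since $v$ has order $d$ offspring). A secondary subtlety is verifying that the described event genuinely forces a \emph{new} exit: one must use that on $C\cap B^c$ the original trajectory did not exit $\mathcal T_n$ (else nothing to prove) and that inserting $\mathcal A_n$ at a point the particle provably reaches — before any original bridge on that stretch and before exiting — deterministically reroutes it across to level $n$; here \Cref{no direction conflict} guarantees the particle traverses the relevant stretch of $v$'s pole in a consistent direction, so the newly inserted bridge is indeed encountered. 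Since this lemma is quoted from \cite{hammond2} as a black box, the write-up can be brief: cite the construction there and note that the $u\neq1$ case is identical because $\mathcal A_n$'s type (direction-maintaining vs.\ -switching) does not affect whether crossing it sends the particle from level $n-1$ to level $n$.
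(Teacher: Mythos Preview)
The paper does not prove this lemma; it is quoted from \cite{hammond2} as a black box (the text just before the statement says ``The first is stated without proof''), so there is no in-paper argument to compare against. That said, your sketch contains a genuine gap.

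The problem is step (v). You build an event $E\subseteq P_n^+\cap C$ with $\P_T(E)\ge de^{-\tau}p_{n-1}/|E(\mathcal T_n)|$ and then claim that intersecting with $B^c$ ``only discards a part of the probability we are lower-bounding, so the inequality is preserved.'' This is backwards: from $E\cap B^c\subseteq P_n^+\cap C\cap B^c$ you get $\P_T(P_n^+\cap C\cap B^c)\ge\P_T(E\cap B^c)$, but you have no lower bound on $\P_T(E\cap B^c)$ --- only on $\P_T(E)$. Your construction places $\mathcal A_n$ on an edge from level $n-1$ to level $n$, so the path from the root to $(\mathcal A_n)^+$ contains $n-1$ edges, any of which may support exactly one bridge; nothing in your event rules this out, and intersecting with $B^c$ could cost an uncontrolled factor.

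The clean construction --- the one behind \cite[Lemma~4.3]{hammond2} --- places $\mathcal A_n$ on an edge incident to the \emph{root} rather than at depth $n-1$. Take the event that (i) none of the $d$ edges at $\phi$ carries an original bridge (probability $e^{-dT}=e^{-\tau}$), (ii) $\mathcal A_n$ lies on one of those $d$ edges (probability $d/|E(\mathcal T_n)|$), and (iii) after crossing $\mathcal A_n$ to the child $w$, the particle reaches level $n$, which by the homogeneity of the tree and cyclic height-invariance has probability $p_{n-1}$ (the return edge to $\phi$ is a no-op since $\phi$ carries no other bridges). On this event $C$ and $P_n^+$ hold for the reasons you give, and $B^c$ holds \emph{vacuously}: $(\mathcal A_n)^+=\phi$, so there are no edges between the root and $(\mathcal A_n)^+$ at all. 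The three factors multiply to exactly $de^{-\tau}p_{n-1}/|E(\mathcal T_n)|$, with nothing left to absorb.
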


\begin{lemma}[Modification of Lemma 4.5 of \cite{hammond2}]\label{l.A1second}
Suppose that $n\geq 1$, $d\geq 2$, and $1\leq \tau\leq 1+2/d$. Then
$$\P_T(P_n^{-} \cap C\cap B^c) \leq \frac{p_{n-1}}{|E(\mathcal T_n)|}\left(\tau(\tau+1) + 6e^{-1}\frac{a^2(4a^2-11a-9)}{(1-a)^3}\right),$$
where $a =\frac{\tau^2e^{2+\frac{\tau}{d}}}{d(e-1)}$.
\end{lemma}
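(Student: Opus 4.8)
The goal is to bound $\P_T(P_n^- \cap C \cap B^c)$ from above by a multiple of $p_{n-1}/|E(\mathcal T_n)|$, mirroring Lemma~4.5 of \cite{hammond2} but with sharper constants. On the event $P_n^- \cap C$, the added bridge $\mathcal A_n$ prevents the particle from leaving $\mathcal T_n$; since we are also on $B^c$ (no bottleneck single bridge between the root and the parent vertex of $\mathcal A_n$), the relevant obstruction is local to the two poles incident to $\mathcal A_n$. The first step is to write $\P_T(P_n^- \cap C \cap B^c)$ as a sum over the (random) edge $e$ supporting $\mathcal A_n$ and its height, and to factor out the $p_{n-1}$ coming from the particle having to reach level-$(n-1)$ structure near $e$ before the added bridge can matter; this gives the $p_{n-1}/|E(\mathcal T_n)|$ prefactor, exactly as in \cite{hammond2}. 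What remains is to bound the conditional probability, given that the particle crosses $\mathcal A_n$, that the rerouting caused by $\mathcal A_n$ together with the pre-existing bridges on the two incident poles actually traps the particle inside $\mathcal T_n$.

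The core estimate is a careful counting of bridge configurations on the pole of $e^+$ (the parent vertex) in the window of length $\le T$ swept out before the particle would have exited. First I would condition on the number $k$ of pre-existing bridges on this pole lying in the relevant interval; this number is Poisson with parameter bounded by $\tau/d \cdot (\text{something})$, and more usefully, the bridges on the poles of $\mathcal T_n$ relevant to the trapping event number $k$ with the $k$-th such event requiring $k$ independent $\mathrm{Exp}$-type gaps to fit in time $T$. The quantity $a = \tau^2 e^{2 + \tau/d}/(d(e-1))$ is precisely the per-step contraction factor one gets: each additional layer of rerouting contributes a factor $a$, and one sums the resulting geometric-type series $\sum_{k} (\text{poly in }k)\, a^k$, which evaluates to a rational function of $a$ with denominator $(1-a)^3$ because the polynomial weight is quadratic in $k$. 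The explicit numerator $a^2(4a^2 - 11a - 9)$ and the constant $6e^{-1}$ come out of this summation; matching them requires keeping track of the combinatorial factor (choices of which bridges on which of the $d$ sibling poles reroute the particle) and the $e^{-1}$ from optimizing a $k k^{-k}$-type term or from a Poisson normalization, together with the $\tau(\tau+1)$ term which handles the $k=0,1$ base cases separately (no rerouting, or a single rerouting).

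Concretely, the steps in order are: (i) decompose over the support edge and height of $\mathcal A_n$ and peel off $p_{n-1}$, reducing to a local event on the two incident poles; (ii) on $B^c$, observe that each edge between the root and $e^+$ supports either zero or at least two bridges, so the trapping must be realized by a chain of rerouting steps each of which consumes a fresh bridge and a fresh $\mathrm{Exp}(d+1)$-type time gap, analogously to the move-forward analysis but in reverse; (iii) bound the probability of a length-$k$ rerouting chain by $C_k a^k$ where $C_k$ is the explicit combinatorial weight (linear or quadratic in $k$) and $a$ is as stated, using the same exponential Markov-inequality bound on sums of exponentials as in \Cref{ANT} to produce the $e^{2+\tau/d}$ and the $d(e-1)$ in the denominator of $a$; (iv) separate the $k = 0$ and $k = 1$ terms, which give $\tau(\tau+1)$, and sum the tail $k \ge 2$ via $\sum_{k \ge 2} C_k a^k = 6e^{-1} a^2(4a^2 - 11a - 9)/(1-a)^3$, a routine but bookkeeping-heavy power-series identity; (v) check $a < 1$ under the hypothesis $d \ge 2$, $1 \le \tau \le 1 + 2/d$ so the geometric series converges and the sign of the numerator is as needed for the bound to be a genuine upper bound.

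\textbf{Main obstacle.} The hard part is step (iii)--(iv): identifying the correct combinatorial weight $C_k$ for a $k$-fold rerouting chain and verifying that the per-step factor is exactly $a$ rather than a sloppier bound. This is where \cite{hammond2}'s argument must be redone rather than cited, because the original paper's constant is weaker; one must be scrupulous about (a) which poles the intermediate bridges live on (the $d$ siblings versus the parent), (b) the direction-switching caused by double bars, where \Cref{no direction conflict} is invoked to rule out the particle revisiting a pole segment with the opposite orientation, and (c) the precise optimization in the Markov/Chernoff step that yields $e^{2+\tau/d}$ rather than a cruder exponential. Getting the numerator polynomial $a^2(4a^2 - 11a - 9)$ exactly right is then just the algebra of summing $\sum k(k-1)\cdots a^k$, but it is unforgiving of off-by-one errors in $C_k$.
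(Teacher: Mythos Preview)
Your plan is much more laborious than what the paper actually does, and it rests on a mistaken premise. You write that ``[hammond2]'s argument must be redone rather than cited, because the original paper's constant is weaker,'' and accordingly you propose to rebuild the entire decomposition (steps (i)--(iii)) from scratch. In fact the paper's proof does the opposite: it takes as a black box from \cite{hammond2} both the decomposition $\P_T(P_n^-\cap C\cap B^c)\le \sum_{k\ge 0} A_{n,k}$ and the per-term bounds
\[
A_{n,0}\le \frac{p_{n-1}}{|E(\mathcal T_n)|}\,\tau(\tau+1),\qquad
A_{n,k}\le \frac{6p_{n-1}}{|E(\mathcal T_n)|}\,e^{-1}\Bigl((1-e^{-\tau})^{-1}e^{\tau/d+1}\tau^2 d^{-1}\Bigr)^{k}(k+2)^2\quad(k\ge 1).
\]
The only new content is the final summation: where \cite{hammond2} used the crude $(k+2)^2\le 2^{k+1}$ and $\tau\le 2$, the paper here uses $\tau\ge 1$ to bound the base by $a=\tau^2 e^{2+\tau/d}/(d(e-1))$ and then applies the closed-form identity $\sum_{k\ge 1}(k+2)^2 a^k = a(4a^2-11a+9)/(1-a)^3$. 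That is the whole improvement.

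Two concrete details in your outline are off. First, the term $\tau(\tau+1)$ comes from $k=0$ alone, not from $k=0,1$; the geometric-type tail is summed from $k=1$, which is why the numerator carries a single power of $a$ in front (the $a^2$ and the sign on the constant in the displayed statement are typos---the computation gives $a(4a^2-11a+9)$). Second, there is no need to invoke \Cref{no direction conflict} or to redo a Chernoff optimisation: the factor $a$ arises just by bounding $(1-e^{-\tau})^{-1}\le e/(e-1)$ for $\tau\ge 1$ in the already-established $A_{n,k}$ estimate. So while your plan would ultimately recover the same bound if executed correctly, the ``main obstacle'' you identify is not an obstacle at all---it is work that has already been done in \cite{hammond2} and can simply be cited.
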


\begin{proof}[Proof sketch]
Here we indicate how to modify the proof of Lemma 4.5 in \cite{hammond2} to obtain our claim. This comes down to explicitly evaluating a certain sum instead of bounding it.
In the proof of \cite[Lemma 4.5]{hammond2}, the following inequality is obtained:
$$\P_T(P_n^-\cap C\cap B^c)\leq \sum_{k=0}^\infty A_{n,k} \, ,$$
where $A_{n,k}$ is a technical quantity which we will not define; for the purpose of getting a tighter bound,  note that it is proven in~\cite{hammond2} that $A_{n, 0} \leq \frac{p_{n-1}}{|E(\mathcal T_n|}\tau(\tau+1)$ and
$$A_{n,k} \leq \frac{6p_{n-1}}{|E(\mathcal T_n)|}e^{-1}\left((1-e^{-\tau})^{-1}e^{\tau/d+1}\tau^2d^{-1}\right)^k (k+2)^2.$$
Now we only need to estimate the sum of the right hand side as $k$ varies from 0 to $\infty$. This is done in \cite{hammond2} by bounding the exponential term using $\tau\leq 2$ and that $(k+2)^2\leq 2^{k+1}$, but a much tighter bound is easily obtainable.

Using that $\tau\geq 1$, we can bound the exponent as
$$\frac{\tau^2e^{1+\tau/d}}{d(1-e^{-\tau})} \leq \frac{\tau^2e^{2+\frac{\tau}{d}}}{d(e-1)} =: a.$$
The identity  $\sum_{k=1}^\infty (k+2)^2 a^k = a(4a^2-11a+9)/(1-a)^3$  is valid for any $-1< a <1$. With this and the bound on the exponent, summing the series gives the claimed upper bound.
\end{proof}

In \cite{hammond2}, a lower bound on $A_2$ is obtained from the bound on $A_1$; that argument goes through even when $u\neq 1$. Hence, we obtain the following:

\begin{lemma}[Proposition 3.2 of \cite{hammond2}]\label{l.A2}
Let $n\geq 2$, $d\geq 26$, $\tau\in[1, 1+2d^{-1}]$. Then
$$A_2 = \P_T(P_n^+ \cap C \cap B\cap N) - \P_T(P_n^- \cap C \cap B\cap N) \geq 0 \, .$$
\end{lemma}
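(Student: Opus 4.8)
The plan is to reduce \Cref{l.A2} to the coupling underlying Proposition~3.2 of \cite{hammond2}, taking the combinatorial skeleton of that argument as a black box and checking that nothing in it is special to $u=1$. Concretely, the structure I would invoke is this: on $C\cap B$ one has the bottleneck bridge $b_n$, which lies on the path from the root to the parent vertex of $\mathcal A_n$ and is crossed by the particle before $\mathcal A_n$ is reached; the portion of the trajectory up to its first crossing of $b_n$ depends only on the environment above $b_n$ and is therefore insensitive to $\mathcal A_n$, while on $N$ the first later recrossing of $b_n$ commits the particle to reaching the root without leaving $\mathcal T_n$. Hence whether the particle exits $\mathcal T_n$ is decided entirely by the excursion it makes below $b_n$ in the subtree carrying $\mathcal A_n$, and one produces a $\P_T$-preserving local modification of the environment (a surgery confined to a bounded neighbourhood of $\mathcal A_n$) sending configurations in $P_n^-\cap C\cap B\cap N$ to configurations in $P_n^+\cap C\cap B\cap N$. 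Since this map is an injection into $P_n^+\cap C\cap B\cap N$, it yields $\P_T(P_n^+\cap C\cap B\cap N)\geq\P_T(P_n^-\cap C\cap B\cap N)$, which is exactly $A_2\geq 0$.

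To carry this over to general $u\in[0,1]$ I would verify two things. First, that the modification map remains measure preserving: the surgery only inserts, deletes or relocates bridges in a bounded region around $\mathcal A_n$ and never changes whether a given bridge is a cross or a double bar, so it preserves both the rate-one Poisson law of the bridge heights and the independent $\Ber(u)$ cross/double-bar labels, exactly as when $u=1$; in particular the weight $u$ (resp.\ $1-u$) carried by a cross (resp.\ double bar) $\mathcal A_n$ is matched on the two sides. Second --- and this is where I expect the real work to lie --- that the trajectory bookkeeping on which ``the image configuration lies in $P_n^+$'' rests survives the presence of direction-switching bridges: the claims that $b_n$ is crossed before $\mathcal A_n$, that the particle cannot re-enter the region above $b_n$ except across $b_n$, and that a recrossing of $b_n$ forces a return to the root, are all read off from counting and ordering traversals of bridges, and one must confirm these are undisturbed when some traversals reverse the vertical direction of motion. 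Here I would appeal to \Cref{no direction conflict} in precisely the way it is used in the proof of \Cref{l.angel}: since the particle can never occupy a fixed segment of a pole once moving up and once moving down, the parities and orderings of crossings used in \cite{hammond2} hold verbatim, and the modification map is still a well-defined injection into $P_n^+\cap C\cap B\cap N$.

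Finally, I do not expect the hypotheses $n\geq 2$ and $d\geq 26$ to play any role specific to this lemma: $d\geq 26$ is retained only for consistency with \Cref{l.A1first} and \Cref{l.A1second}, whose purpose is to make $A_1$ large enough that $A_1+A_2>0$ follows from the mere nonnegativity of $A_2$, while $n\geq 2$ is the range in which $b_n$ and the subtree below it behave as the argument needs. The main obstacle is therefore not a new estimate but a careful audit of \cite[Section~3]{hammond2} for any implicit use of $u=1$ in the construction and injectivity of the modification map, together with the (expectedly routine) invocations of \Cref{no direction conflict} that take its place.
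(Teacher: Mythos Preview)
Your proposal rests on a misreading of what \cite{hammond2} actually does. There is no measure-preserving ``surgery'' or injection from $P_n^-\cap C\cap B\cap N$ into $P_n^+\cap C\cap B\cap N$ underlying Proposition~3.2 there, and the paper is explicit about this: the lower bound on $A_2$ is \emph{derived from the bound on $A_1$}. Roughly, once the particle has crossed the lowest bottleneck bridge $b_n$, the situation in the subtree below $b_n^-$ is a fresh instance of the ``no bottleneck'' case (there are no further single-bridge edges between $b_n^-$ and the parent of $\mathcal A_n$, by choice of $b_n$), and so the conditional difference of the $P_n^{\pm}$-probabilities is controlled by an $A_1$-type quantity in a shallower tree. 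The positivity of $A_1$---which is exactly what Lemmas~\ref{l.A1first} and~\ref{l.A1second} combine to give for $d\geq 26$ and $\tau\in[1,1+2d^{-1}]$---is then what forces $A_2\geq 0$. In particular, $d\geq 26$ is not a cosmetic hypothesis carried along ``for consistency'': it is what makes the $A_1$ bound positive, and the $A_2$ argument genuinely consumes that positivity.

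Your coupling sketch, by contrast, never specifies the local modification; you only assert that ``one produces'' such a map and that it ``never changes whether a given bridge is a cross or a double bar''. Without an explicit construction there is nothing to check, and in any case an honest injection argument would prove $A_2\geq 0$ for \emph{all} $d$, which is more than the lemma claims and more than \cite{hammond2} establishes. The structural observations you make about $b_n$ and the event $N$ are correct and are indeed part of the decomposition, but the way they are used is analytic (reduce to an $A_1$-type estimate below $b_n$), not bijective. If you want to follow the paper, you should instead verify that this reduction step---conditioning on the environment above $b_n$ and invoking the $A_1$ bound in the subtree below---is insensitive to $u$, which is straightforward and is the content of the remark preceding the lemma.
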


\begin{proof}[Proof of \Cref{pn-derivative}]
\Cref{pn-deriv-expression} asserts that 
$$\frac{\mathrm dp_n}{\mathrm dT}(T) = |E(\mathcal T_n)|\left(\P_T(P_n^+) - \P_T(P_n^{-})\right) = |E(\mathcal T_n)|(A_1+A_2).$$
Using Lemmas \ref{l.A1first}, \ref{l.A1second}, and \ref{l.A2}, we find that
\begin{align*}
\frac{\mathrm dp_n}{\mathrm dT}(T) %&\geq p_{n-1}\left[de^{-\tau} - \tau(\tau+1) - 6e^{-1}\tau^2\frac{e^{2+\frac{\tau}{d}}}{d(e-1)}\cdot \sum_{k=1}^\infty A_{n,k}\right]\\
%&\geq p_{n-1}\left[de^{-\tau} - \tau(\tau+1) - 6e^{-1}\tau^2\frac{e^{2+\frac{\tau}{d}}}{d(e-1)}\cdot \sum_{k=1}^\infty \left(\frac{\tau^2e^{2+\frac{\tau}{d}}}{d(e-1)}\right)^{k-1}(k+2)^2\right]\\
&\geq p_{n-1}\left[de^{-\tau} - \tau(\tau+1) - 6e^{-1}\frac{a^2(4a^2-11a+9)}{(1-a)^3}\right].
\end{align*}
where $a = \frac{\tau^2e^{2+\frac{\tau}{d}}}{d(e-1)}$. We see that the expression is increasing in $d$ for fixed $\tau$ and decreasing in $\tau$ for fixed $d$. Thus it is bounded below by the value at $\tau = 1+2/d$, and numerically it can be verified that this expression (with $\tau = 1+2/d$) becomes strictly positive at $d=26$. Hence it is positive for all $d\geq 26$ and $1\leq \tau \leq 1+2/d$.
\end{proof}

%\begin{proof}[Proof of \Cref{main_thm}]
%Here we simply write out the ranges guaranteed by the above arguments and check that they overlap.
%
%\begin{itemize}
%\item From \Cref{p.near critical point}, we get for large $d$ that we have transience for
%$$T\in[d^{-1}+\alpha_cd^{-2}+o(d^{-2}), d^{-1}+d^{-2}),$$
%where $\alpha_c = 1-u(1-u)-1/6(1-u)^2$.
%
%\item From \Cref{monotonicity} and the previous line, we get transience for $d\geq 26$ and
%$$T\in[d^{-1}+d^{-2}, d^{-1}+2d^{-2}].$$
%
%\item From \Cref{angel theorem}, we get transience for $d\geq 56$ and
%$$T\in[d^{-1}+2d^{-2}, 4d^{-1}],$$
%as well as for $d\geq 9$ and
%$$T\in[4d^{-1}, \frac12].$$
%
%\item Finally, from \Cref{infinite ray}, we get transience for $d\geq 11$ and
%$$T\in[0.48, \infty).$$
%\end{itemize}
%For $T<T_c$, the argument from percolation combined with the monotonicity argument of \Cref{monotonicity} is sufficient. This completes the proof.
%\end{proof}
%

%\nocite{*}
\bibliographystyle{alpha}
\bibliography{infinite-ray-switching-bars}

\begin{thebibliography}{BBBU15}

\bibitem[AN94]{aizenman}
Michael Aizenman and Bruno Nachtergaele.
\newblock Geometric aspects of quantum spin states.
\newblock {\em Comm. Math. Phys.}, 164(1):17--63, 1994.

\bibitem[Ang03]{angel}
Omer Angel.
\newblock Random infinite permutations and the cyclic time random walk.
\newblock In {\em Discrete random walks ({P}aris, 2003)}, Discrete Math. Theor.
  Comput. Sci. Proc., AC, pages 9--16. Assoc. Discrete Math. Theor. Comput.
  Sci., Nancy, 2003.

\bibitem[BBBU15]{numeric_study}
Alessandro Barp, Edoardo~Gabriele Barp, Fran\c{c}ois-Xavier Briol, and Daniel
  Ueltschi.
\newblock A numerical study of the 3{D} random interchange and random loop
  models.
\newblock {\em J. Phys. A}, 48(34):345002, 12, 2015.

\bibitem[BU16]{ueltschi}
J.~E. {Bj{\"o}rnberg} and D.~{Ueltschi}.
\newblock {Critical parameter of random loop model on trees}.
\newblock {\em ArXiv e-prints}, August 2016.

\bibitem[Fey53]{feynman}
Richard~P. Feynman.
\newblock Atomic theory of the $\lambda$ transition in {H}elium.
\newblock {\em Physical Review}, 91(6):1291, 1953.

\bibitem[Gri99]{grimmett-percolation}
Geoffrey Grimmett.
\newblock {\em Percolation}, volume 321 of {\em Grundlehren der Mathematischen
  Wissenschaften [Fundamental Principles of Mathematical Sciences]}.
\newblock Springer-Verlag, Berlin, second edition, 1999.

\bibitem[Ham13]{hammond1}
Alan Hammond.
\newblock Infinite cycles in the random stirring model on trees.
\newblock {\em Bull. Inst. Math. Acad. Sin. (N.S.)}, 8(1):85--104, 2013.

\bibitem[Ham15]{hammond2}
Alan Hammond.
\newblock Sharp phase transition in the random stirring model on trees.
\newblock {\em Probab. Theory Related Fields}, 161(3-4):429--448, 2015.

\bibitem[Har72]{harris}
T.E. Harris.
\newblock Nearest-neighbor {M}arkov interaction processes on multidimensional
  lattices.
\newblock {\em Advances in mathematics}, 9(1):66--89, 1972.

\bibitem[T\'93]{toth}
B\'alint T\'oth.
\newblock Improved lower bound on the thermodynamic pressure of the spin
  {$1/2$} {H}eisenberg ferromagnet.
\newblock {\em Lett. Math. Phys.}, 28(1):75--84, 1993.

\bibitem[Uel13]{ueltschi-reverse}
Daniel Ueltschi.
\newblock Random loop representations for quantum spin systems.
\newblock {\em J. Math. Phys.}, 54(8):083301, 40, 2013.

\end{thebibliography}
 
% \medskip
 
%\printbibliography
\end{document}